\newcommand*{\Scale}[2][4]{\scalebox{#1}{$#2$}}%
\newtheorem{theo}{Theorem}
\newtheorem{prop}[theo]{Proposition}
\newtheorem{cor}[theo]{Corollary}
\newtheorem*{theo*}{Theorem}
\newtheorem{rem}[theo]{Remark} 
\newcommand{\matN}{\ensuremath {\mathbb{N}}}
\newcommand{\matQ}{\ensuremath {\mathbb{Q}}}
\newcommand{\matR} {\ensuremath {\mathbb{R}}}
\newcommand{\matC} {\ensuremath {\mathbb{C}}}
\newcommand{\matZ} {\ensuremath {\mathbb{Z}}}
\renewcommand{\epsilon}{\ensuremath \varepsilon}
\renewcommand{\bar}[1]{\ensuremath \overline{#1}}
\begin{document}
 \title[Degree zero part of the motivic polylogarithm on abelian schemes]{The realization of the degree zero part of the motivic polylogarithm on abelian schemes in Deligne-Beilinson cohomology}

\author{Danny Scarponi}




\maketitle

  
  
  
  
\begin{abstract}
 We use Burgos' theory of arithmetic Chow groups to  exhibit a realization of the degree zero part of the polylogarithm on abelian schemes in Deligne-Beilinson cohomology.
\end{abstract}

\section{\textbf{INTRODUCTION}}
\subsection{The degree zero part of the motivic polylogarithm}
In \cite{kingsross}, G. Kings and D. R\"ossler have given a simple axiomatic description of the degree zero part of the polylogarithm on abelian schemes. We briefly recall it here.

In \cite{soul85}, C. Soul\'e has defined motivic cohomology for any variety $V$ over a field
\[
 H^{i}_{\mathcal{M}}(V,j):=\textnormal{Gr}^j_{\gamma}K_{2j-i}(V)\otimes \matQ.
\]
Now let $\pi :\mathcal{A}\rightarrow S$ be an abelian scheme of relative dimension $g$, let $\epsilon: S \rightarrow\mathcal{A}$ be the zero section, $N>1$ an integer and let 
$\mathcal{A}[N]$ be the finite group scheme of $N$-torsion points. Here $S$ is smooth over a subfield $k$ of the complex numbers.
For any integer $a>1$ and any $W\subseteq \mathcal{A}$ open sub-scheme such that 
\[
 j:[a]^{-1}(W)\hookrightarrow W
\]
is an open immersion (here $[a]:\mathcal{A}\rightarrow \mathcal{A}$ is the $a$-multiplication on $\mathcal{A}$), the trace map with respect to $a$ is defined as
\begin{equation}
\begin{tikzpicture}[description/.style={fill=white,inner sep=2pt}, ciao/.style={fill=red,inner sep=2pt}]
\matrix (m) [matrix of math nodes, row sep=3.5em,
column sep=2.5em, text height=1.5ex, text depth=0.25ex]
{ \textnormal{tr}_{[a]}:H^{\cdot}_{\mathcal{M}}(W,*) & H^{\cdot}_{\mathcal{M}}([a]^{-1}(W),*)
& H^{\cdot}_{\mathcal{M}}(W,*) \\
  };
	\path[->,font=\scriptsize] 
		(m-1-1) edge node[above] {$ j^* $} (m-1-2)
		(m-1-2) edge node[above] {$ [a]_* $} (m-1-3)
		;
\end{tikzpicture}
\end{equation}
For any integer $r$ we let
\[
 H^{\cdot}_{\mathcal{M}}(W,*)^{(r)}:=\{ \psi\in H^{\cdot}_{\mathcal{M}}(W,*)|( \textnormal{tr}_{[a]}-a^r\textnormal{Id})^k\psi=0 \textnormal{ for some } k\geq 1 \}
\]
be the generalized eigenspace of $ \textnormal{tr}_{[a]}$ of weight $r$.

 Then the zero step of the motivic polylogarithm
 is a class in motivic cohomology
\[
 \textnormal{pol}^0\in H^{2g-1}_{\mathcal{M}}(\mathcal{A}\setminus\mathcal{A}[N],g).
\]
To describe it more precisely, consider the residue map along $\mathcal{A}[N]$
\[
 H^{2g-1}_{\mathcal{M}}(\mathcal{A}\setminus\mathcal{A}[N],g)\rightarrow H^0_{\mathcal{M}}(\mathcal{A}[N]\setminus \epsilon(S),0).
\]
This map induces an isomorphism
\[
 H^{2g-1}_{\mathcal{M}}(\mathcal{A}\setminus\mathcal{A}[N],g)^{(0)}\cong H^0_{\mathcal{M}}(\mathcal{A}[N]\setminus \epsilon(S),0)^{(0)}
\]
(see Corollary 2.2.2 in \cite{kingsross}) and $\rm{pol}^0$ is the unique element mapping to the fundamental class of $\mathcal{A}[N]\setminus \epsilon(S)$.

Now let us consider the map $\textnormal{cyc}_{\textnormal{an}}$ defined as the composition
\begin{equation} 
\begin{tikzpicture}[scale=0.5][description/.style={fill=white,inner sep=0.5pt}, ciao/.style={fill=red,inner sep=2pt}]
\matrix (m) [matrix of math nodes, row sep=3.5em,
column sep=2em, text height=1.5ex, text depth=0.25ex]
{ H^{2g-1}_{\mathcal{M}}(\mathcal{A}\setminus\mathcal{A}[N],g) &  \textnormal{H}^{2g-1}_{D}((\mathcal{A}\setminus\mathcal{A}[N])_{\matR},\matR(g)) &
\textnormal{H}^{2g-1}_{D,\textnormal{an}}((\mathcal{A}\setminus\mathcal{A}[N])_{\matR},\matR(g)) \\
  };
	\path[->,font=\scriptsize] 
		(m-1-1) edge node[above] {$ \textnormal{cyc} $} (m-1-2)
		(m-1-2) edge node[above] {$ \textnormal{forgetful} $} (m-1-3)
		;
\end{tikzpicture} 
\end{equation} 
where $ \textnormal{cyc}$ is the regulator map into Deligne-Beilinson
cohomology and the second map is the forgetful map from Deligne-Beilinson cohomology
to analytic real Deligne cohomology (see the end of section 2 for the notations used here). 

In \cite{onacan}, V. Maillot and D. R\"ossler constructed a canonical class of currents $\mathfrak{g}_{\mathcal{A}^\vee} $  on $\mathcal{A}$ (cf. Theorem \ref{uniqclass} in Section 5)
which gives rise to  a class in analytic Deligne cohomology
\[
 ([N]^\ast \mathfrak{g}_{\mathcal{A}^\vee}
 -N^{2g} \mathfrak{g}_{\mathcal{A}^\vee})|_{\mathcal{A}\setminus\mathcal{A}[N]}\in \textnormal{H}^{2g-1}_{D,\textnormal{an}}((\mathcal{A}\setminus\mathcal{A}[N])_{\matR},\matR(g)).
\]
This element is represented by  $\frac{\gamma}{(2\pi i)^{1-g}}$, where $\gamma$ is any smooth form on $\mathcal{A}\setminus \mathcal{A}[N]$ 
in the class 
$[N]^*\mathfrak{g}_{\mathcal{A}^\vee}-N^{2g}\mathfrak{g}_{\mathcal{A}^\vee}$.

The main result in \cite{kingsross} is the following.
\begin{theo} \label{KR}
 We have
 \[
  -2   \cdot \textnormal{cyc}_{\textnormal{an}} (\textnormal{pol}^0)=([N]^\ast \mathfrak{g}_{\mathcal{A}^\vee}
 -N^{2g} \mathfrak{g}_{\mathcal{A}^\vee})|_{\mathcal{A}\setminus\mathcal{A}[N]}.
 \]
Furthermore the map \[H^{2g-1}_{\mathcal{M}}(\mathcal{A}\setminus\mathcal{A}[N],g)^{(0)}\rightarrow 
\textnormal{H}^{2g-1}_{D,\textnormal{an}}((\mathcal{A}\setminus\mathcal{A}[N])_{\matR},\matR(g))
\] 
induced by $\textnormal{cyc}_{\textnormal{an}} $ is injective.
\end{theo}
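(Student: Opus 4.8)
The plan is to reduce both assertions to the behaviour of the residue map, exploiting the fact that it becomes an isomorphism after passing to the weight-zero generalized eigenspaces. First I would record that the regulator $\textnormal{cyc}$, and hence the composition $\textnormal{cyc}_{\textnormal{an}}$, is compatible with the localization (Gysin) sequences on both sides, so that it intertwines the motivic residue map along $\mathcal{A}[N]$ with the corresponding residue map in analytic Deligne cohomology; that is, the square
\[
\begin{array}{ccc}
H^{2g-1}_{\mathcal{M}}(\mathcal{A}\setminus\mathcal{A}[N],g) & \longrightarrow & \textnormal{H}^{2g-1}_{D,\textnormal{an}}((\mathcal{A}\setminus\mathcal{A}[N])_{\matR},\matR(g)) \\
\downarrow & & \downarrow \\
H^0_{\mathcal{M}}(\mathcal{A}[N]\setminus\epsilon(S),0) & \longrightarrow & \textnormal{H}^{0}_{D,\textnormal{an}}((\mathcal{A}[N]\setminus\epsilon(S))_{\matR},\matR(0))
\end{array}
\]
commutes, where the horizontal maps are $\textnormal{cyc}_{\textnormal{an}}$ (top) and the degree-zero regulator $\textnormal{cyc}$ (bottom), and the vertical maps are the residue maps. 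I would also check that $\textnormal{cyc}_{\textnormal{an}}$ commutes with the trace operators $\textnormal{tr}_{[a]}$ on both sides: this is formal, since $\textnormal{tr}_{[a]}$ is built from the functorial operations $j^*$ and $[a]_*$, which the regulator respects. Consequently $\textnormal{cyc}_{\textnormal{an}}$ preserves the weight decomposition and carries the weight-zero eigenspace to the weight-zero eigenspace.

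Next I would argue that the right-hand side $([N]^\ast\mathfrak{g}_{\mathcal{A}^\vee}-N^{2g}\mathfrak{g}_{\mathcal{A}^\vee})|_{\mathcal{A}\setminus\mathcal{A}[N]}$ also lies in the weight-zero eigenspace of $\textnormal{tr}_{[a]}$ in analytic Deligne cohomology. This is where the axiomatic characterization of the Maillot--R\"ossler current (Theorem \ref{uniqclass}) enters: its transformation law under the multiplication maps $[a]$ is precisely what forces the difference $[N]^\ast\mathfrak{g}_{\mathcal{A}^\vee}-N^{2g}\mathfrak{g}_{\mathcal{A}^\vee}$ to be an eigenvector of weight $0$. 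Granting this, both sides of the claimed identity live in $\textnormal{H}^{2g-1}_{D,\textnormal{an}}(\cdots)^{(0)}$, on which the residue map is injective: in the localization sequence its kernel is the image of $\textnormal{H}^{2g-1}_{D,\textnormal{an}}(\mathcal{A}_{\matR},\matR(g))$, and on the proper abelian scheme $\mathcal{A}$ the operator $\textnormal{tr}_{[a]}$ acts with eigenvalues $a^r$, $r\geq 1$, so the weight-zero part of that group vanishes. It therefore suffices to compare residues. By commutativity of the square and the characterization of $\textnormal{pol}^0$, the residue of the left-hand side is $-2\,\textnormal{cyc}$ of the fundamental class of $\mathcal{A}[N]\setminus\epsilon(S)$; the residue of the right-hand side is computed directly from the Green-current property of $\mathfrak{g}_{\mathcal{A}^\vee}$, since $[N]^\ast\mathfrak{g}_{\mathcal{A}^\vee}$ is a Green current for $[N]^{-1}\epsilon(S)=\mathcal{A}[N]$ while $N^{2g}\mathfrak{g}_{\mathcal{A}^\vee}$ is smooth away from $\epsilon(S)$.

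The injectivity statement then follows by a diagram chase in the same square restricted to weight-zero eigenspaces: the left vertical residue map is an isomorphism by Corollary 2.2.2 of \cite{kingsross}, and the bottom horizontal regulator on $H^0$ is injective because the degree-zero cycle class map on the finite scheme $\mathcal{A}[N]\setminus\epsilon(S)$ is injective. Commutativity then forces $\textnormal{cyc}_{\textnormal{an}}$ to be injective on $H^{2g-1}_{\mathcal{M}}(\mathcal{A}\setminus\mathcal{A}[N],g)^{(0)}$.

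I expect the main obstacle to be the explicit residue computation of the canonical current, and in particular pinning down the normalization constant $-2$ together with the $(2\pi i)^{1-g}$ twist. This requires unwinding the construction of $\mathfrak{g}_{\mathcal{A}^\vee}$ as a current and matching it against the conventions of Deligne--Beilinson cohomology; the factor $-2$ ultimately reflects the $dd^c$-normalization and the logarithmic behaviour $-\log\abs{\cdot}^2$ of a Green current near its singular locus, and getting this constant (rather than $-1$ or $+2$) correct is the delicate point.
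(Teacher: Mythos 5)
First, a point of context: the paper does not prove this statement at all. It is quoted verbatim from \cite{kingsross} (``The main result in \cite{kingsross} is the following''), and the paper's own contribution, Theorem \ref{main}, is a refinement of it. So the only meaningful comparison is with the proof of Kings and R\"ossler themselves, and your outline does reproduce its architecture faithfully: equivariance of the regulator with respect to the trace operators, membership of both sides in the weight-zero generalized eigenspace (for the right-hand side this rests on property (3) of Theorem \ref{uniqclass}, $[n]_*\mathfrak{g}_{\mathcal{A}^\vee}=\mathfrak{g}_{\mathcal{A}^\vee}$), vanishing of the weight-zero part of the cohomology of the proper total space $\mathcal{A}$ (eigenvalues $a^r$, $r\geq 1$, via the Leray decomposition), and reduction of both the identity and the injectivity claim to a comparison of residues along $\mathcal{A}[N]$.

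As a proof, however, the proposal has two genuine gaps. The first you flag yourself: the entire quantitative content of the identity --- the constant $-2$ and the twist $(2\pi i)^{1-g}$ --- sits in the residue computation for the current class, which you do not carry out. The step ``the residue of the right-hand side is computed directly from the Green-current property'' is exactly where the Green equation (the smoothness of $\delta_{S_0}+\textnormal{dd}^{\textnormal{c}}\mathfrak{g}_{\mathcal{A}^\vee}$) must be confronted with the normalization of cycle classes in Deligne cohomology; this is where the factor $2(2\pi i)^{1-p}$ appearing in Burgos' comparison isomorphism (cf.\ the proof of Theorem \ref{genlem}) originates, and without this computation one has only proved that $\textnormal{cyc}_{\textnormal{an}}(\textnormal{pol}^0)$ is proportional to the current class. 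The second gap is structural: every step of your argument --- the commutative square of residue maps, the identification of the kernel of the residue with the image of $\textnormal{H}^{2g-1}_{D,\textnormal{an}}(\mathcal{A}_\matR,\matR(g))$, purity over $\mathcal{A}[N]\setminus\epsilon(S)$, and the injectivity argument, which leans on the same square --- presupposes a localization/Gysin formalism for \emph{analytic} real Deligne cohomology of the open variety $\mathcal{A}\setminus\mathcal{A}[N]$. Unlike Deligne--Beilinson cohomology (computed with growth conditions at infinity, e.g.\ by Burgos' complex $E^*_{\textnormal{log}}$), the analytic theory on non-proper varieties is not a twisted Poincar\'e duality theory and does not come with these exact sequences for free; Kings and R\"ossler have to build the relevant maps by hand through a current-theoretic description. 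Indeed, the poor formal behaviour of the analytic theory is the very reason the present paper exists: its point is to lift the identity to Deligne--Beilinson cohomology, where the functoriality you invoke is actually available. A minor further point: commuting $[a]_*$ past $[N]^*$ when verifying that the current class has weight zero requires the relevant square to be cartesian, hence $\gcd(a,N)=1$ (or an independence-of-$a$ argument), which should be stated.
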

\subsection{Our main result}
In this paper we give a refinement of Theorem \ref{KR} supposing $S$ is proper over $k$ (see Corollary \ref{poly} and Theorem \ref{realDB}).

Before stating our result, we recall that in \cite{burgosarith},  Burgos introduced a  complex that naturally computes the Deligne-Beilinson cohomology: this is the 
complex $E^*_{\textnormal{log}}(\cdot)$ of smooth forms with logarithmic singularities along infinity 
(see section 3.1 for its definition). 
\begin{theo}\label{main}
 Let $S$ be proper over $k$.  The class of currents  $[N]^*\mathfrak{g}_{\mathcal{A}^\vee}-N^{2g}\mathfrak{g}_{\mathcal{A}^\vee}$
 has a representative which is smooth on $\mathcal{A}\setminus \mathcal{A}[N]$ 
and has  logarithmic singularities along infinity. Any such $\eta$
   defines an element
 \[\frac{\widetilde{\eta}}{2(2\pi i)^{1-g}}\in \textnormal{Im}\left(\textnormal{cyc}: 
 H^{2g-1}_{\mathcal{M}}(\mathcal{A}\setminus \mathcal{A}[N] ,g) \rightarrow \textnormal{H}^{2g-1}_{D}
 ((\mathcal{A}\setminus \mathcal{A}[N])_\matR, \matR(g))\right)
 \]
which does not depend on the choice of $\eta$. This element verifies
 \[
  -2 \cdot  \textnormal{cyc} (\textnormal{pol}^0)=\frac{\widetilde{\eta}}{(2\pi i)^{1-g}}
 \]
 and
 \[
  \textnormal{forgetful}\left(\frac{\widetilde{\eta}}{(2\pi i)^{1-g}}\right)= ([N]^\ast \mathfrak{g}_{\mathcal{A}^\vee}
 -N^{2g} \mathfrak{g}_{\mathcal{A}^\vee})|_{\mathcal{A}\setminus\mathcal{A}[N]}.
 \]
\end{theo}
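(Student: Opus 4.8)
The plan is to establish the theorem in three stages, corresponding to its three assertions, with the crucial input being Burgos' logarithmic complex $E^*_{\textnormal{log}}(\cdot)$ and the properness hypothesis on $S$.

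First I would address the existence of a smooth representative $\eta$ with logarithmic singularities along infinity. By Theorem \ref{uniqclass} (the Maillot--R\"ossler construction), the class $[N]^*\mathfrak{g}_{\mathcal{A}^\vee}-N^{2g}\mathfrak{g}_{\mathcal{A}^\vee}$ is a class of currents, and we already know from the discussion preceding Theorem \ref{KR} that it is represented by a smooth form $\gamma$ on $\mathcal{A}\setminus\mathcal{A}[N]$. The new content is to control the behavior along infinity, i.e.\ along the boundary divisor in a smooth compactification $\overline{\mathcal{A}}$ of $\mathcal{A}\setminus\mathcal{A}[N]$. The properness of $S$ is exactly what lets us choose a good compactification and reduce the question to a local analysis near the boundary; I would invoke the explicit form of the Maillot--R\"ossler current (built from Bismut--K\"ohler analytic torsion / theta-type kernels) to check that the representative one naturally obtains extends with at worst logarithmic growth, so that it lies in $E^{2g-1}_{\textnormal{log}}((\mathcal{A}\setminus\mathcal{A}[N])_\matR)$. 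This is where I expect the main obstacle to lie: verifying the precise logarithmic bounds on the Maillot--R\"ossler form near the compactifying divisor is the technical heart, and it is what genuinely uses $S$ proper.

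Next, given such an $\eta$, I would produce the element $\frac{\widetilde{\eta}}{2(2\pi i)^{1-g}}$ in the image of the regulator $\textnormal{cyc}$. Since $E^*_{\textnormal{log}}(\cdot)$ computes Deligne--Beilinson cohomology (Burgos, \cite{burgosarith}), the form $\eta$ determines a class $\widetilde{\eta}$ in $\textnormal{H}^{2g-1}_D((\mathcal{A}\setminus\mathcal{A}[N])_\matR,\matR(g))$. The point is to show this class lies in the image of the regulator, not merely in Deligne--Beilinson cohomology. For this I would combine Theorem \ref{KR} with the injectivity statement therein: the forgetful map sends $\frac{\widetilde{\eta}}{(2\pi i)^{1-g}}$ to the analytic class $([N]^*\mathfrak{g}_{\mathcal{A}^\vee}-N^{2g}\mathfrak{g}_{\mathcal{A}^\vee})|_{\mathcal{A}\setminus\mathcal{A}[N]}$, which by Theorem \ref{KR} equals $-2\cdot\textnormal{cyc}_{\textnormal{an}}(\textnormal{pol}^0)$; since $\textnormal{cyc}_{\textnormal{an}}=\textnormal{forgetful}\circ\textnormal{cyc}$, the class $-2\cdot\textnormal{cyc}(\textnormal{pol}^0)$ is a natural candidate lying in the image of the regulator whose forgetful image coincides. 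Independence of the choice of $\eta$ then follows because any two logarithmic representatives of the same current class differ by an exact form in $E^*_{\textnormal{log}}$, hence define the same Deligne--Beilinson class.

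Finally, to pin down the identity $-2\cdot\textnormal{cyc}(\textnormal{pol}^0)=\frac{\widetilde{\eta}}{(2\pi i)^{1-g}}$ in $\textnormal{H}^{2g-1}_D$ itself (and not only after applying the forgetful map), I would argue that the forgetful map is injective on the relevant subspace. Both sides have the same image under the forgetful map by the previous step, so it suffices to show that the forgetful map $\textnormal{H}^{2g-1}_D((\mathcal{A}\setminus\mathcal{A}[N])_\matR,\matR(g))\to\textnormal{H}^{2g-1}_{D,\textnormal{an}}(\cdots)$ is injective on the image of the regulator, or at least on the line spanned by these classes. Here again properness of $S$ is useful: it forces the boundary contributions that measure the difference between Deligne--Beilinson and analytic Deligne cohomology to vanish on the pertinent weight-zero eigenspace, so the two cohomology theories agree where it matters and the lifted identity holds. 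The second displayed equation, giving the forgetful image, is then simply the restatement of the compatibility established in the second stage, completing the proof.
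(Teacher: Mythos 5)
Your proposal has a genuine gap at its core: you never actually prove that $\frac{\widetilde{\eta}}{2(2\pi i)^{1-g}}$ lies in the image of the regulator $\textnormal{cyc}$. In your second stage you observe that its image under the forgetful map agrees with that of $-2\cdot\textnormal{cyc}(\textnormal{pol}^0)$ and call the latter ``a natural candidate''; but agreement after applying the forgetful map only tells you that the difference of the two classes lies in $\ker(\textnormal{forgetful})$, and that kernel can be large (Deligne--Beilinson cohomology of $\mathcal{A}\setminus\mathcal{A}[N]$ retains information about the compactification which the analytic theory forgets). To conclude you would need $\ker(\textnormal{forgetful})\subseteq \textnormal{Im}(\textnormal{cyc})$, or injectivity of the forgetful map on a subspace containing both classes --- and your third stage tries to supply exactly this, but the only injectivity available (Theorem \ref{KR}) is that of $\textnormal{cyc}_{\textnormal{an}}=\textnormal{forgetful}\circ\textnormal{cyc}$ on $H^{2g-1}_{\mathcal{M}}(\mathcal{U},g)^{(0)}$, which yields injectivity of the forgetful map only on $\textnormal{cyc}\bigl(H^{2g-1}_{\mathcal{M}}(\mathcal{U},g)^{(0)}\bigr)$. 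Since membership of $\frac{\widetilde{\eta}}{2(2\pi i)^{1-g}}$ in precisely that subspace is what stage 2 was supposed to produce, the argument is circular. The assertion that properness of $S$ ``forces the boundary contributions to vanish'' is not a substitute: no such injectivity statement is proved or used anywhere.

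What is missing is the actual mechanism of the paper, which is arithmetic intersection theory. One first proves (Proposition \ref{zero}) that the arithmetic cycle $T=[N]^*(\epsilon(S),\mathfrak{g}_{\mathcal{A}^\vee})-N^{2g}(\epsilon(S),\mathfrak{g}_{\mathcal{A}^\vee})$ vanishes in $\widehat{\textnormal{CH}}^g(\mathcal{A})_\matQ$: by Theorem \ref{uniqclass}(2) one has $[T]=(-1)^g([N]^*-N^{2g})\,p_{1,*}(\widehat{\textnormal{ch}}(\bar{\mathcal{P}}))^{(g)}$, and base change through the cartesian square for $N\times \textnormal{Id}$, the identity $(N\times\textnormal{Id})^*\mathcal{P}\cong\mathcal{P}^{\otimes N}$ and multiplicativity of $\hat{\textnormal{c}}_1$ give the cancellation. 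One then invokes the exact sequence for Burgos' groups, $\textnormal{CH}^{g,g-1}(\mathcal{U}) \xrightarrow{\ \rho\ } \textnormal{H}^{2g-1}_D(\mathcal{U}_\matR,\matR(g)) \xrightarrow{\ a\ } \widehat{\textnormal{CH}}^g_{\textnormal{log}}(\mathcal{U})$, together with $\textnormal{CH}^{g,g-1}(\mathcal{U})_\matQ\cong H^{2g-1}_{\mathcal{M}}(\mathcal{U},g)$: since $a\bigl(\widetilde{\eta}/(2(2\pi i)^{1-g})\bigr)=i^*\bigl(\psi_{\mathcal{A}}^{-1}([T])\bigr)=0$ in $\widehat{\textnormal{CH}}^g_{\textnormal{log}}(\mathcal{U})_\matQ$, exactness places the class in $\rho\bigl(\textnormal{CH}^{g,g-1}(\mathcal{U})_\matQ\bigr)=\textnormal{Im}(\textnormal{cyc})$, and a trace-operator argument upgrades this to the weight-zero eigenspace; only then does Theorem \ref{KR} enter, to identify the motivic preimage with $-2\cdot\textnormal{pol}^0$. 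Note also that your stage 1 is handled abstractly in the paper: since $\mathcal{A}$ is proper, Burgos' identification of Green currents with Green forms with logarithmic singularities (Theorem 5.9 of \cite{burgosarith}, used in Theorem \ref{genlem}) produces the logarithmic representative directly, so the explicit estimates on the Maillot--R\"ossler current near a compactifying divisor that you propose --- and correctly identify as a serious obstacle --- are unnecessary.
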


\subsection{An outline of the paper}
Let us now give an outline of the contents of each section.

In section 2 we review some notations and definitions coming from Arakelov theory.

In section 3 we recall Burgos' theory of arithmetic Chow groups.

Sections 4,5 and 6 contain the proof of Theorem \ref{main}. This proof combines two arguments. On one hand  we use Burgos' theory 
in order to prove  an interesting intermediate result which relates the classical arithmetic Chow groups to Deligne-Beilinson cohomology (see Theorem \ref{genlem} in section 4).
On the other hand we do some calculations using the current $\mathfrak{g}_{\mathcal{A}^\vee} $
in order to prove that the class of \[T:=[N]^*(\epsilon(S),  \mathfrak{g}_{\mathcal{A}^\vee})-N^{2g}(\epsilon(S),  \mathfrak{g}_{\mathcal{A}^\vee}) \] in 
$\widehat{\textnormal{CH}}^g (\mathcal{A})_\matQ$ is zero (cf. Proposition \ref{zero} in section 5). This proposition will allow us to apply Theorem \ref{genlem} to our case 
(see section 6). 
\section{\textbf{NOTATIONS}}
We begin with a review of some notations and definitions coming from Arakelov theory (see Sections 1,2,3 in \cite{gilsou} for a compendium).
  
  Let $(R,\Sigma,F_\infty)$ be an arithmetic ring i.e. 
\begin{itemize}
 \item $R$ is an excellent regular Noetherian integral domain,
 \item $\Sigma$ is a finite nonempty set of monomorphisms $\sigma:R\rightarrow \matC$,
 \item $F_\infty$ is an anti-linear involution of the $\matC-$algebra $\matC^{\Sigma}:=\matC \underbrace{\times ... \times}_{|\Sigma|}\matC$, such that the diagram
\begin{center}
\begin{tikzpicture}[description/.style={fill=white,inner sep=2pt}, ciao/.style={fill=red,inner sep=2pt}]
\matrix (m) [matrix of math nodes, row sep=3.5em,
column sep=3.5em, text height=1.5ex, text depth=0.25ex]
{ R & \matC^{\Sigma}  \\
  R & \matC^{\Sigma}  \\
  };
	\path[->,font=\scriptsize]
		(m-1-1) edge node[left] {$ \textnormal{Id} $} (m-2-1)
		
		(m-1-2) edge node[auto] {$ F_{\infty} $} (m-2-2)
		
		;
	\path[->,font=\scriptsize] 
		(m-1-1) edge node[above] {$ \delta $} (m-1-2)
		(m-2-1) edge node[above] {$ \delta $} (m-2-2)
		
		;
\end{tikzpicture}
\end{center}
commutes (here by $\delta$ we mean the natural map to the product induced by the family of maps $\Sigma$). 
\end{itemize}
Let $X$ be an arithmetic variety over $R$, i.e. a scheme of finite type over $R$, which is flat, quasi-projective and regular.
As usual we write
\[
 X(\matC):=\coprod_{\sigma\in \Sigma}(X\times_{R,\sigma}\matC)(\matC).
\]
We notice that $F_\infty$ induces an involution $F_\infty:X(\matC)\rightarrow X(\matC)$.
Let $p,q\in \matN$ and $y$ be any cycle in $Z^p(X)$ with $Y:=\textnormal{supp } y$. We denote by:
\begin{itemize}
 \item $D^{p,p}(X_\matR)$ the $\matR$-vector space of real currents 
$\zeta$ on $X(\matC)$ of type $(p,p)$ such that $F^*_\infty \zeta=(-1)^{p}\zeta$,
 \item  $\tilde D^{p,p}(X_\matR)$ the quotient $D^{p,p}(X_\matR)/(\textnormal{Im}\partial 
+\textnormal{Im}\bar\partial)$,
 \item $E^{p,p}(X_\matR)$ the $\matR$-vector space of 
smooth real forms $\omega$ on $X(\matC)$ of type $(p,p)$
such that $F^*_\infty \zeta=(-1)^{p}\omega$,
 \item  $\tilde E^{p,p}(X_\matR)$ the quotient $E^{p,p}(X_\matR)/(\textnormal{Im}\partial 
+\textnormal{Im}\bar\partial)$,
 \item $\textnormal{CH}^q (X)$ the q-th ordinary Chow group of $X$,
 \item $\widehat{\textnormal{CH}}^q (X)$ the q-th arithmetic Chow group of $X$.
\end{itemize}
We also fix the following notations for the analytic real Deligne cohomology and the Deligne-Beilinson cohomology (see \cite{esnault} for the definitions):
\begin{itemize}
\item $\textnormal{H}^{q}_{D, \textnormal{an}}(X(\matC), \matR(p))$ the q-th analytic real Deligne cohomology $\matR$-vector space,
\item $\textnormal{H}^{q}_{D,\textnormal{an}}(X_{\matR}, \matR(p))$ the set $\{\gamma \in \textnormal{H}^{q}_{D,\textnormal{an}}(X(\matC), \matR(p))|F^*_\infty \gamma=(-1)^{p}\gamma \}$,
\item $\textnormal{H}^{q}_{D}(X(\matC), \matR(p))$ the q-th Deligne-Beilinson cohomology $\matR$-vector space,
 \item $\textnormal{H}^{q}_{D,Y}(X(\matC), \matR(p))$ the q-th Deligne-Beilinson cohomology $\matR$-vector space with support in $Y$,
 \item $\textnormal{H}^{q}_{D}(X_{\matR}, \matR(p))$ the set $\{\gamma \in \textnormal{H}^{q}_{D}(X(\matC), \matR(p))|F^*_\infty \gamma=(-1)^{p}\gamma \}$,
 \item $\textnormal{H}^{q}_{D,Y}(X_{\matR}, \matR(p))$ the set $\{\gamma \in \textnormal{H}^{q}_{D,Y}(X(\matC), \matR(p))|F^*_\infty \gamma=(-1)^{p}\gamma \}$.
 \end{itemize}
\section{\textbf{BURGOS' ARITHMETIC CHOW GROUPS}}
For the convenience of the reader we recall in this section some definitions and basic facts of Burgos' arithmetic Chow groups (cf. \cite{burgosarith} for more details).

\subsection{Smooth forms with logarithmic singularities along infinity} Let us start with the definition of smooth differential
forms with logarithmic singularities along infinity.

Let $V$ be a smooth algebraic variety  over $\matC$ and let $D$ be a divisor with normal crossings on $V$. Let us write $W=V\setminus D$ and
let $j:W\hookrightarrow V$ be the inclusion. Let $\mathcal{E}^*_V$ be the sheaf of complex $\textnormal{C}^\infty $ differential forms on $V$ and let
$E^*(V) $ denote $\Gamma(V,\mathcal{E}^*_V)$. The complex of sheaves 
$\mathcal{E}^*_V(\textnormal{log}D)$ is the sub-$\mathcal{E}^*_V$ algebra of $j_*\mathcal{E}^*_W $ generated locally by the sections
\[
 \textnormal{log}z_i\bar{z}_i, \ \frac{dz_i}{z_i}, \ \frac{d\bar{z}_i}{\bar{z}_i}, \ \ \ \textnormal{for} \ i=1, \dots, M,
\]
where $z_1 \cdots z_M=0$ is a local equation of $D$.

Let us write $E^*_V(\textnormal{log}D)=\Gamma(V,\mathcal{E}^*_V(\textnormal{log}D))$ 
and let $E^*_{V,\matR}(\textnormal{log}D)$ be the subcomplex of real forms.

Let $I$ be the category of all smooth compactifications of $V$. That is, an element $(\tilde V_{\alpha},i_\alpha)$ of $I$ is a smooth complex variety 
$\tilde V_{\alpha}$ and an immersion $i_\alpha:V\hookrightarrow \tilde V_{\alpha}$ such that $D_\alpha = \tilde V_{\alpha}-i_\alpha (V)$ is a normal crossing divisor. The morphisms
of $I$ are the maps $f:\tilde V_{\alpha}\rightarrow \tilde V_{\beta}$ such that $f\circ i_\alpha=i_\beta$. The opposed category $I^o$ is directed (see \cite{del}).
The complex of 
smooth differential
forms with logarithmic singularities along infinity  $E^*_{\textnormal{log}}(V)$ is defined as
\[
 E_{\textnormal{log}}^*(V)=\varinjlim_{\alpha\in I^o} E_{\tilde V_{\alpha}}^*(\textnormal{log} D_\alpha)
\]
This complex is a subcomplex of $E^*(V)$ and 
we shall denote by $E^*_{\textnormal{log},\matR}(V)$ the corresponding real  subcomplex.

The complex $E^*_{\textnormal{log}}(V)$ has a natural bigrading 
\[
 E^*_{\textnormal{log}}(V)=\bigoplus E^{p,q}_{\textnormal{log}}(V).
\]
The Hodge filtration of this complex is defined by 
\[
 F^p E^n_{\textnormal{log}}(V)= \bigoplus_{\substack{p'\geq p \\ p'+q'=n}} E^{p',q'}_{\textnormal{log}}(V).
\]
We write $E^*_{\textnormal{log},\matR}(V,p)=(2\pi i)^p E^*_{\textnormal{log},\matR}(V)\subseteq E^*_{\textnormal{log}}(V)$.

An important property of the complex $E_{\textnormal{log}}^*(V)$ is that it is strictly related to Deligne-Beilinson cohomology. More precisely, let us consider the following
complex
\[
 E^*_{\textnormal{log},\matR}(V,p)_D:= s(u:E^*_{\textnormal{log},\matR}(V,p)\oplus F^p E^*_{\textnormal{log}}(V)\rightarrow E_{\textnormal{log}}^*(V) )
\]
where $u(a,b)=b-a$ and $s(u)$ stands for the simple complex of the map $u$. Then 
\[
 \textnormal{H}^{*}_{D}(V, \matR(p))=\textnormal{H}^*(E^*_{\textnormal{log},\matR}(V,p)_D).
\]
\subsection{Definition of Burgos' arithmetic Chow groups}
Now fix $p\in\matN^*$. Any cycle $y\in Z^p(X)$ defines a class in 
\[ \rho(y)\in
\textnormal{H}^{2p}_{D,Y}(X_{\matR}, \matR(p))
\]
where $Y=\textnormal{supp } y$. Any $g\in E^{p-1,p-1}_{\textnormal{log},\matR}(X(\matC)\setminus Y(\matC),p-1)$                                          
with 
 $\partial \bar{\partial} g\in E^{2p}(X(\matC))$, also
defines a class \[\{-2\partial \bar{\partial} g,g\}\in \textnormal{H}^{2p}_{D,Y}(X, \matR(p)).\]
The space of Green forms associated with $y$ is then
\[ \Scale[0.9]{
 \textnormal{GE}^p_y(X_\matR):=\left.\left\{ g\in   E^{p-1,p-1}_{\textnormal{log},\matR}(X(\matC)
 \setminus Y(\matC),p-1) \left|
 \begin{array}{c}  
 -2\partial \bar{\partial} g\in E^{2p}(X(\matC)) \\
 \{-2\partial \bar{\partial} g,g\}=\rho(y) \\
 F_\infty^*
  g=\bar g 
\end{array}\right.
 \right\}\right/ (\textnormal{Im}\partial +\textnormal{Im} \bar \partial). }
\] 
The group of arithmetic cycles in the sense of Burgos is
\[
 \hat{Z}^p(X):=\left\{ (y,\tilde g)| \ y\in Z^p(X) \textnormal{ and } \tilde g\in\textnormal{GE}^p_y(X_\matR)\right\}.
\]
If $W$ is a codimension $p-1$ irreducible subvariety of $X$ and $f\in k(W)^*$, we have a well-defined
subvariety $W(\matC)$ of $X(\matC)$  and a well-defined function $f_\matC\in k(W(\matC))^*$. To $f_\matC$ is associated a class
\[ 
 \rho(f_\matC)\in \textnormal{H}^{2p-1}_{D}\left((X\setminus F)_\matR, \matR(p)\right)
\]
where $F=\textnormal{supp div} f$. Since $\textnormal{H}^{2p-1}_{D}\left((X\setminus F)_\matR, \matR(p)\right)$
is the same as
\[
 \left.\left\{ g\in  E^{p-1,p-1}_{\textnormal{log},\matR}(X(\matC)\setminus F(\matC),p-1)| \
 \partial \bar{\partial} g=0 \textnormal{ and } 
 F_\infty^*g=\bar g \right\}\right/(\textnormal{Im}\partial +\textnormal{Im} \bar \partial),
\]
then one can check that $\rho(f_\matC)$ defines
an element in $\textnormal{GE}^p_{\textnormal{div} f}(X_\matR)$, which we denote by $ \textnormal{b}(\rho(f_\matC))$. Let
$\widehat{\textnormal{Rat}}^p(X)$ be the subgroup of
$\hat{Z}^p(X)$
generated by the elements of the form
\[
 \widehat{\textnormal{div}}f=\left(\textnormal{div}f,-\textnormal{b}(\rho(f_\matC))\right).
\]
The arithmetic Chow group 
$\widehat{\textnormal{CH}}^p_{\textnormal{log}} (X)$ in the sense of Burgos is 
\[
\widehat{\textnormal{CH}}^p_{\textnormal{log}} (X):=\hat{Z}^p(X)\big/\widehat{\textnormal{Rat}}^p(X).
\]
The class of an element $(y,\tilde g)\in \hat{Z}^p(X)$ will be denoted by $[y,\tilde g]$. 
\subsection{Two important properties}
Burgos' arithmetic Chow groups fit in the following exact sequence
\begin{equation}\label{exbur}
\begin{tikzpicture}[description/.style={fill=white,inner sep=2pt}, ciao/.style={fill=red,inner sep=2pt}]
\matrix (m) [matrix of math nodes, row sep=2.5em,
column sep=2em, text height=1.5ex, text depth=0.25ex]
{  \textnormal{CH}^{p,p-1}(X) & \textnormal{H}^{2p-1}_D(X_{\matR},\matR(p)) &
\widehat{\textnormal{CH}}^p_{\textnormal{log}} (X) & \textnormal{CH}^p (X) 
\oplus Z E^{p,p}_{\textnormal{log}}(X_\matR) \\
  };

	\path[->,font=\scriptsize] 
		(m-1-1) edge node[above] {$ \rho $} (m-1-2)
		(m-1-2) edge node[above] {$ \textnormal{a} $} (m-1-3)
		(m-1-3) edge node[above] {$ (\zeta,-\omega) $} (m-1-4)
		;
\end{tikzpicture}
\end{equation}
where $ \textnormal{CH}^{p,p-1}$ is Gillet-Soulé's version of one of Bloch's higher Chow groups,
$ \rho$ is defined in the proof of Corollary 6.3 in \cite{burgosarith},
the map $a$ sends the class of $\tilde f$ to $[0,\tilde f]$ and $(\zeta,-\omega)([y,\tilde g])=(y,2\partial \bar
\partial g)$. Later we will make use of the fact
\[
 \textnormal{CH}^{p,p-1}(X)_\matQ \cong H^{2p-1}_{\mathcal{M}}(X,p)
\]
(see section 1.4 in \cite{burgoscoho} for this).

Furthermore there exists a homomorphism
\[
 \psi_X:\widehat{\textnormal{CH}}^p_{\textnormal{log}}(X)\rightarrow  \widehat{\textnormal{CH}}^p (X)
\]
which is compatible with pull-backs and is an isomorphism if $X$ is proper over $R$.
In particular for any 
$y\in Z^p(X)$, we have a commutative diagram
\begin{center}
\begin{tikzpicture}[description/.style={fill=white,inner sep=2pt}, ciao/.style={fill=red,inner sep=2pt}]
\matrix (m) [matrix of math nodes, row sep=3.5em,
column sep=3.5em, text height=1.5ex, text depth=0.25ex]
{ \widehat{\textnormal{CH}}^p_{\textnormal{log}} (X)&  \widehat{\textnormal{CH}}^p_{\textnormal{log}} (X\setminus Y) \\
  \widehat{\textnormal{CH}}^p (X) & \widehat{\textnormal{CH}}^p (X\setminus Y) \\
  };
	\path[->,font=\scriptsize]
		(m-1-1) edge node[left] {$ \psi_X $} (m-2-1)
		
		(m-1-2) edge node[auto] {$ \psi_{X\setminus Y} $} (m-2-2)
		
		;
	\path[->,font=\scriptsize] 
		(m-1-1) edge node[above] {$ i^\ast $} (m-1-2)
		(m-2-1) edge node[above] {$ i^\ast $} (m-2-2)
		
		;
\end{tikzpicture}
\end{center}
where $Y=\textnormal{supp }y$ and the map $i$ is the immersion $X\setminus Y \hookrightarrow X$.
\section{\textbf{AN INTERMEDIATE RESULT}}
\begin{theo}\label{genlem}
 Let $X/R$ be a proper arithmetic variety, $y$ any cycle in $Z^p(X)$ and $h$ any Green current for $y$. Then there exists a representative $h_0$ of $h$ belonging to
 $E^{p-1,p-1}_{\textnormal{log},\matR}(X(\matC)\setminus Y(\matC))$, where $Y=\textnormal{supp}\ y$. 
 If $\omega([y,h]):=\delta_y+\textnormal{dd}^{\textnormal{c}} h=0$, then $\frac{h_0}{2(2\pi i)^{1-p}}$ defines a class 
 \[\frac{\widetilde{h_0}}{2(2\pi i)^{1-p}}\in  
\textnormal{H}^{2p-1}_{D}((X\setminus Y)_\matR, \matR(p))\] which does not depend on the choice of $h_0$ and verifies
\[
 a\left(\frac{\widetilde{h_0}}{2(2\pi i)^{1-p}}\right)=i^*\left(\psi_{X}^{-1}([y,h])\right).
\]
\end{theo}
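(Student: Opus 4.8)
The plan is to deduce everything from the two structural facts recalled in Section 3: the isomorphism $\psi_X$ (available because $X$ is proper) and the exact sequence (\ref{exbur}), together with the description of $\textnormal{H}^{2p-1}_{D}$ by closed forms with logarithmic singularities given at the end of Section 3.1. First I would produce the representative $h_0$. Since $X$ is proper, $\psi_X:\widehat{\textnormal{CH}}^p_{\textnormal{log}}(X)\to\widehat{\textnormal{CH}}^p(X)$ is an isomorphism, so $\psi_X^{-1}([y,h])$ is a well-defined class, represented by some $(y,h_0)$ with $h_0\in\textnormal{GE}^p_y(X_\matR)$; in particular $h_0\in E^{p-1,p-1}_{\textnormal{log},\matR}(X(\matC)\setminus Y(\matC),p-1)$. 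The equality $\psi_X([y,h_0])=[y,h]$ says precisely that $h_0$, viewed as a current on $X(\matC)$, represents the Green current $h$, which gives the asserted representative (up to the harmless twist by $(2\pi i)^{p-1}$ distinguishing $E^{*}_{\textnormal{log},\matR}(\cdot)$ from $E^{*}_{\textnormal{log},\matR}(\cdot,p-1)$).

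Next I would use the hypothesis $\omega([y,h])=0$. The curvature of $(y,h_0)$ in Burgos' formalism is, up to the normalizing constant, the smooth form $2\partial\bar\partial h_0$ appearing in $(\zeta,-\omega)([y,h_0])=(y,2\partial\bar\partial h_0)$, and $-2\partial\bar\partial h_0$ extends to a global smooth form on $X(\matC)$ because $h_0$ is a Green form; under the classical--Burgos dictionary this global form corresponds to $\delta_y+\textnormal{dd}^{\textnormal{c}}h=\omega([y,h])$. Hence $\omega([y,h])=0$ forces $\partial\bar\partial h_0=0$ on $X(\matC)\setminus Y(\matC)$, so that on the open variety $h_0$ is a closed real form of type $(p-1,p-1)$ with logarithmic singularities satisfying $F_\infty^{*}h_0=\bar h_0$. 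By the description of $\textnormal{H}^{2p-1}_{D}((X\setminus Y)_\matR,\matR(p))$ recalled at the end of Section 3.1 as such closed forms modulo $\textnormal{Im}\,\partial+\textnormal{Im}\,\bar\partial$, the normalized form $\frac{h_0}{2(2\pi i)^{1-p}}$ defines a class $\frac{\widetilde{h_0}}{2(2\pi i)^{1-p}}$ there; the factor $2$ and the twist $(2\pi i)^{1-p}$ are exactly those built into Burgos' conventions for $\rho$ and $a$.

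I would then verify the identity with $i^{*}\psi_X^{-1}([y,h])$. Since $Y=\textnormal{supp}\,y$, one has $i^{*}y=0$ in $Z^p(X\setminus Y)$, hence $i^{*}[y,h_0]=[0,h_0|_{X\setminus Y}]$ in $\widehat{\textnormal{CH}}^p_{\textnormal{log}}(X\setminus Y)$. The map $a$ of the sequence (\ref{exbur}) for $X\setminus Y$ sends the Deligne class of a closed log form to $[0,\cdot]$, so by construction $a\!\left(\frac{\widetilde{h_0}}{2(2\pi i)^{1-p}}\right)=[0,h_0|_{X\setminus Y}]=i^{*}[y,h_0]=i^{*}\psi_X^{-1}([y,h])$, which is the asserted formula; consistently, $(\zeta,-\omega)$ applied to this pulled-back class equals $(i^{*}y,2\partial\bar\partial h_0)=(0,0)$, so it indeed lies in $\textnormal{Im}\,a=\ker(\zeta,-\omega)$.

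Finally comes independence of $h_0$, which I expect to be the main obstacle. Given two representatives $h_0,h_0'$ of the same current $h$, the difference $h_0-h_0'$ is a closed $(p-1,p-1)$-form with logarithmic singularities on $X\setminus Y$ whose associated current on $X(\matC)$ lies in $\textnormal{Im}\,\partial+\textnormal{Im}\,\bar\partial$; restricting along the open immersion $i$ keeps it a $\partial+\bar\partial$-boundary of currents on $X(\matC)\setminus Y(\matC)$. Note that the bare $\widehat{\textnormal{CH}}_{\textnormal{log}}$ manipulation only shows the two Deligne classes differ by an element of $\textnormal{Im}\,\rho=\ker a$, so injectivity of $a$ is not available and a genuinely analytic input is needed. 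The point is that the inclusion of the complex of forms with logarithmic singularities into the corresponding complex of currents is a quasi-isomorphism computing $\textnormal{H}^{*}_{D}(X\setminus Y,\matR(p))$, which is the comparison underlying the identification recalled in Section 3.1. Therefore a closed log form that is a boundary in the current complex is already a boundary in the log complex, so $h_0-h_0'\in\textnormal{Im}\,\partial+\textnormal{Im}\,\bar\partial$ inside $E^{*}_{\textnormal{log},\matR}(X\setminus Y,p-1)$ and the two classes coincide. The delicate steps are precisely this comparison between logarithmic forms and currents and the bookkeeping of normalizations turning $\omega([y,h])=0$ into $\partial\bar\partial h_0=0$; neither is conceptually deep, but both require care.
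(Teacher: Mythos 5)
There is a genuine gap, and it sits exactly where you flagged the main difficulty: the independence of the Deligne--Beilinson class from the choice of $h_0$. Your argument rests on the claim that the inclusion of the log-forms complex into the complex of currents on $X(\matC)\setminus Y(\matC)$ is a quasi-isomorphism computing $\textnormal{H}^{*}_{D}((X\setminus Y)_\matR,\matR(p))$, so that a closed log form which is a boundary of currents is already a boundary of log forms. That is not what the comparison recalled in Section 3.1 says, and it is false in general: Deligne--Beilinson cohomology of the open variety is computed by the log complex precisely because its Hodge filtration remembers a compactification, whereas forms or currents with no growth condition on $X(\matC)\setminus Y(\matC)$ compute \emph{analytic} real Deligne cohomology (this is the description used in Section 5). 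What your argument actually proves is that $h_0$ and $h_0'$ have the same image under the forgetful map $\textnormal{H}^{2p-1}_{D}((X\setminus Y)_\matR,\matR(p))\rightarrow \textnormal{H}^{2p-1}_{D,\textnormal{an}}((X\setminus Y)_\matR,\matR(p))$, and this map is not injective in general; its failure of injectivity is the very reason that refining Theorem \ref{KR} into Theorem \ref{main} requires any work at all. The paper closes this hole with two inputs from Burgos that your proposal never invokes: Proposition 6.5 in \cite{burgoscoho}, which shows that \emph{any} log representative $h_0$ of $h$ satisfies the full Green-form condition, i.e. $\frac{\widetilde{h_0}}{2(2\pi i)^{1-p}}\in\textnormal{GE}^p_y(X_\matR)$ (here $\delta_y+\textnormal{dd}^{\textnormal{c}}h_0=0$ holds for every representative because $\textnormal{dd}^{\textnormal{c}}$ kills $\textnormal{Im}\,\partial+\textnormal{Im}\,\bar\partial$); and Theorem 5.9 in \cite{burgosarith} (this is where properness of $X$ enters), which says that the map $\textnormal{GE}^p_y(X_\matR)\rightarrow \textnormal{GC}_X(y)$, $\tilde g\mapsto 2(2\pi i)^{1-p}[g]$, is an isomorphism, hence injective. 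Since any two log representatives of $h$ both map to $h$, they coincide in $\textnormal{GE}^p_y(X_\matR)$, i.e. modulo $\textnormal{Im}\,\partial+\textnormal{Im}\,\bar\partial$ of log forms; this is the uniqueness your homological comparison cannot deliver.

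The same missing ingredient undermines your existence step. Choosing a representative $(y,h_0)$ of $\psi_X^{-1}([y,h])$ and invoking $\psi_X([y,h_0])=[y,h]$ only gives an equality of classes in $\widehat{\textnormal{CH}}^p(X)$: concretely, it says that $(0,\,2(2\pi i)^{1-p}[h_0]-h)$ is a sum $\sum_i\widehat{\textnormal{div}}\,f_i$ with $\sum_i\textnormal{div}\,f_i=0$, and the Green-current parts of such sums sweep out the image of $\rho$ (equivalently, of $\textnormal{CH}^{p,p-1}(X)$), which is nonzero in general. So your $h_0$ represents $h$ only up to this image, not on the nose, and the first assertion of Theorem \ref{genlem} does not follow. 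The paper instead obtains the representative directly as the preimage $\widetilde{h_{\textnormal{log}}}$ of $h$ under the isomorphism $\textnormal{GE}^p_y(X_\matR)\cong\textnormal{GC}_X(y)$, after which $\psi_X^{-1}([y,h])=[y,\widetilde{h_{\textnormal{log}}}]$ holds by construction and the formula $i^*\psi_X^{-1}([y,h])=[0,\widetilde{h_{\textnormal{log}}}]=a(\widetilde{h_{\textnormal{log}}})$ is immediate from the definition of the pullback. In short: your outline correctly identifies the pressure points, but it replaces the two Burgos results that the proof actually needs by a comparison statement that is false.
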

\begin{proof}
If we denote by $\text{GC}_X(y)$ the space of Green currents for the cycle $y$, Theorem 5.9 in \cite{burgosarith}
tells us that there is a natural isomorphism
\[
 \textnormal{GE}^p_y(X_\matR)\rightarrow GC_X(y)
\]
which sends $\tilde g $ to the class of the current $2 (2\pi i )^{1-p} [g]$, where $[g]$ sends a form $\omega$ to
\[
 [g](\omega)=\int_{X(\matC)} \omega\wedge g.
\]
Let $\widetilde{h_\textnormal{log}}\in \textnormal{GE}^p_y(X_\matR)$ be the inverse image of $h$ by this isomorphism. Any element in $2 (2\pi i )^{1-p} \widetilde{h_\textnormal{log}}$
gives a representative of $h$ belonging to $E^{p-1,p-1}_{\textnormal{log},\matR}(X(\matC)\setminus Y(\matC))$.

If $\delta_y+\textnormal{dd}^{\textnormal{c}} h=0$, we deduce
\[
0= (\delta_y+\textnormal{dd}^{\textnormal{c}} h)_{|_{X(\matC)\setminus Y(\matC)}}=
(\textnormal{dd}^{\textnormal{c}} h)_{|_{X(\matC)\setminus Y(\matC)}}=2 (2\pi i )^{1-p} \textnormal{dd}^{\textnormal{c}}
\left(\widetilde{h_{\textnormal{log}}}\right).
\]
Therefore we have a well-defined element $\widetilde{h_\textnormal{log}}$ in 
\[
 \left.\left\{ g\in   E^{p-1,p-1}_{\textnormal{log},\matR}(X(\matC)
 \setminus Y(\matC),p-1) \left|
 \begin{array}{c}  
 \partial \bar{\partial} g=0 \\
 F_\infty^*
  g=\bar g 
\end{array}\right.
 \right\}\right/ (\textnormal{Im}\partial +\textnormal{Im} \bar \partial)  
\]
i.e. in $\textnormal{H}^{2p-1}_{D}((X\setminus Y)_\matR, \matR(p))$. \\
By definition of $\widetilde{h_{\textnormal{log}}}$ and by definition of the pullback in Burgos' theory, we have
\[
i^* \left(\psi_{X}^{-1}([y,h])\right)=i^* \left(\left[y, \widetilde{h_{\textnormal{log}}}\right]\right)=
\left[0,\widetilde{h_{\textnormal{log}}}\right]=
a \left(\widetilde{h_{\textnormal{log}}}\right).
\]
To conclude the proof, take any representative $h_0$ of $h$ in
$E^{p-1,p-1}_{\textnormal{log},\matR}(X(\matC)\setminus Y(\matC))$. We want to show that $\frac{h_0}{2(2\pi i)^{1-p}}$ defines a class 
$\frac{\widetilde{h_0}}{2(2\pi i)^{1-p}}$ in 
$\textnormal{H}^{2p-1}_{D}((X\setminus Y)_\matR, \matR(p))$ and that this class is $\widetilde{h_\textnormal{log}}$. Since 
$\delta_y+\textnormal{dd}^{\textnormal{c}} h_0=0$, Proposition 6.5 in \cite{burgoscoho} implies that
$\frac{\widetilde{h_0}}{2(2\pi i)^{1-p}}$ is an element in  
$\textnormal{GE}^p_y(X_\matR)$ (please notice that the definitions of current associated to a cycle and of current associated to a differential form used in Burgos' paper
slightly differ from the ones used here). This element
must coincide with $\widetilde{h_\textnormal{log}}$, since both have image $h$ in  $\text{GC}_X(y)$.
\end{proof}
\begin{rem}\label{image}
 Tensoring with $\matQ$ over $\matZ$ is an exact operation, so the exact sequence (\ref{exbur}) shows that
 \begin{align*}
 i^* \left(\psi_{X}^{-1}([y,h])\right)=0\in \widehat{\textnormal{CH}}^p_{\textnormal{log}} (X\setminus Y)_\matQ
 \  & \Leftrightarrow \ \frac{\widetilde{h_0}}{2(2\pi i)^{1-p}}\in \rho
 \left( \textnormal{CH}^{p,p-1}(X\setminus Y)_{\matQ} \right)\\
 & \Leftrightarrow \ \frac{\widetilde{h_0}}{2(2\pi i)^{1-p}}\in \textnormal{cyc}
 \left( H^{2p-1}_{\mathcal{M}}(X\setminus Y,p) \right)
\end{align*}
\end{rem}
\section{\textbf{THE CASE OF AN ABELIAN SCHEME OVER AN ARITHMETIC VARIETY}}
In this section we explain in detail how the class
\[
 ([N]^\ast \mathfrak{g}_{\mathcal{A}^\vee}
 -N^{2g} \mathfrak{g}_{\mathcal{A}^\vee})|_{\mathcal{A}\setminus\mathcal{A}[N]}
\]
actually gives rise to an element in $ \textnormal{H}^{2g-1}_{D,\textnormal{an}}((\mathcal{A}\setminus\mathcal{A}[N])_{\matR},\matR(g))$.

Let $S$ be an arithmetic variety over $R$ and let $\pi:\mathcal{A}\rightarrow S$ be an abelian scheme 
over $S$ of relative dimension $g$. We shall write as usual 
$\mathcal{A}^\vee\rightarrow S$ for the dual abelian scheme. We let $\epsilon_\mathcal{A}=\epsilon$ be the 
zero-section of $\pi:\mathcal{A}\rightarrow S$. 
We shall denote by $S_0=S_{0,\mathcal{A}}=\epsilon_\mathcal{A}(S)$ the reduced 
closed subscheme of $\mathcal{A}$, which is the image of $\epsilon_\mathcal{A}$. 
We write $\mathcal{P}$ for the Poincaré bundle 
on $\mathcal{A}\times_S \mathcal{A}^\vee$ and $p_1:\mathcal{A}\times_S \mathcal{A}^\vee \rightarrow \mathcal{A}$ 
for the first projection.
Since $\mathcal{A}$ and $S$ are arithmetic varieties over $R$, 
we have two well-defined complex manifolds
$\mathcal{A}(\matC)$ and $S(\matC)$ and two well-defined $\matR$-vector spaces $D^{g-1,g-1}(\mathcal{A}_\matR)$ and $E^{g-1,g-1}(\mathcal{A}_\matR)$.
We endow the Poincaré bundle $\mathcal{P}$ with the unique metric 
$h_\mathcal{P}$ such that the canonical rigidification of $\mathcal{P}$ along the zero-section 
$\epsilon\times \textnormal{Id}_{\mathcal{A}^\vee}:\mathcal{A}^\vee\rightarrow 
\mathcal{A}\times_S \mathcal{A}^\vee$ is an isometry and such that the curvature form of $h_\mathcal{P}$ is translation 
invariant along the fibres of the map $\mathcal{A}(\mathbb{C})\times_{S(\mathbb{C})} \mathcal{A}^\vee (\matC)
\rightarrow\mathcal{A}^\vee (\matC)$. We write 
$\bar{\mathcal{P}}=(\mathcal{P},h_\mathcal{P})$ for the resulting hermitian line bundle.

\begin{theo}\label{uniqclass}
 There is a unique class of currents $\mathfrak{g}_{\mathcal{A}^\vee} \in \tilde{D}^{g-1,g-1}(\mathcal{A}_\matR)$ with the 
 following three properties:
 \begin{enumerate}
  \item Any element of $\mathfrak{g}_{\mathcal{A}^\vee}$ is a Green current for $S_0(\matC)$.
  \item The identity  $(S_0,\mathfrak{g}_{\mathcal{A}^\vee})=(-1)^g p_{1,\ast}
  (\widehat{\textnormal{ch}}(\bar{\mathcal{P}}))^{(g)} $ holds in $\widehat{\textnormal{CH}}^g (\mathcal{A})_\matQ $.
  \item The identity $\mathfrak{g}_{\mathcal{A}^\vee}=[n]_\ast \mathfrak{g}_{\mathcal{A}^\vee}$ holds  for all $n\geq 2$.
 \end{enumerate}
\end{theo}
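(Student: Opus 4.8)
The plan is to realize $\mathfrak{g}_{\mathcal{A}^\vee}$ as the canonical $[n]_\ast$-invariant Green current inside a single prescribed coset, and then deduce both existence and uniqueness from an eigenvalue estimate. Set
\[
T:=(-1)^g\left(p_{1,\ast}\,\widehat{\textnormal{ch}}(\bar{\mathcal{P}})\right)^{(g)}\in\widehat{\textnormal{CH}}^g(\mathcal{A})_\matQ,
\]
the class on the right-hand side of property (2). Granting (proved below) that the underlying cycle class of $T$ in $\textnormal{CH}^g(\mathcal{A})_\matQ$ is $[S_0]$, I would pick any Green current $\mathfrak{g}_0$ for $S_0$ (these exist by Gillet--Soul\'e); then $T$ and $[(S_0,\mathfrak{g}_0)]$ have the same image in $\textnormal{CH}^g(\mathcal{A})_\matQ$, so their difference is $a(\beta)$ for some $\beta$ by exactness of (\ref{exbur}), whence $T=[(S_0,\mathfrak{g}_1)]$ with $\mathfrak{g}_1:=\mathfrak{g}_0+\beta$ again a Green current for $S_0$. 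The set of Green currents $\mathfrak{g}$ for $S_0$ with $[(S_0,\mathfrak{g})]=T$ is then exactly the coset $C=\mathfrak{g}_1+V$, where $V=\textnormal{Im}\,\rho\subseteq \textnormal{H}^{2g-1}_D(\mathcal{A}_\matR,\matR(g))$ is the indeterminacy (two such currents agree in $\widehat{\textnormal{CH}}^g$ iff their difference is killed by $a$, i.e.\ lies in $\textnormal{Im}\,\rho$). Every element of $C$ satisfies (1) and (2) by construction, so the whole theorem reduces to showing that $C$ contains a unique element fixed by all $[n]_\ast$.

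For the cycle-class identity I would invoke the Fourier--Mukai computation $Rp_{1,\ast}\mathcal{P}\cong\epsilon_\ast\mathcal{O}_S[-g]$. Applying Grothendieck--Riemann--Roch to $p_1$, whose relative tangent bundle is $p_1^\ast\pi^\ast\textnormal{Lie}(\mathcal{A}^\vee/S)$, the projection formula gives $p_{1,\ast}\textnormal{ch}(\mathcal{P})=(-1)^g\textnormal{ch}(\epsilon_\ast\mathcal{O}_S)\cdot\textnormal{Td}(\pi^\ast\textnormal{Lie})^{-1}$; since $\textnormal{ch}(\epsilon_\ast\mathcal{O}_S)$ starts in codimension $g$ with leading term $[S_0]$, its degree-$g$ part is $(-1)^g[S_0]$, so the cycle class of $T$ is $[S_0]$. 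Next I would check that each $[n]_\ast$ preserves $C$. As $[n]\circ\epsilon=\epsilon$ we have $[n]_\ast S_0=S_0$, so it suffices that $[n]_\ast T=T$. This uses the defining symmetry of the Poincar\'e bundle, $([n]\times\textnormal{id})^\ast\bar{\mathcal{P}}\cong(\textnormal{id}\times[n])^\ast\bar{\mathcal{P}}$ as \emph{hermitian} line bundles, compatibly with $\widehat{\textnormal{ch}}$. Flat base change in the Cartesian square $p_1\circ([n]\times\textnormal{id})=[n]\circ p_1$ gives $[n]^\ast p_{1,\ast}=p_{1,\ast}([n]\times\textnormal{id})^\ast$, while $p_1\circ(\textnormal{id}\times[n])=p_1$ together with $(\textnormal{id}\times[n])_\ast(\textnormal{id}\times[n])^\ast=n^{2g}$ gives $p_{1,\ast}(\textnormal{id}\times[n])^\ast=n^{2g}p_{1,\ast}$. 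Combining these with the symmetry yields $[n]^\ast p_{1,\ast}\widehat{\textnormal{ch}}(\bar{\mathcal{P}})=n^{2g}p_{1,\ast}\widehat{\textnormal{ch}}(\bar{\mathcal{P}})$, hence $[n]^\ast T=n^{2g}T$; applying $[n]_\ast$ and using $[n]_\ast[n]^\ast=n^{2g}$ gives $[n]_\ast T=T$.

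The heart of the matter is the action of $[n]_\ast$ on $\textnormal{H}^{2g-1}_D(\mathcal{A}_\matR,\matR(g))\supseteq V$. Since $S$, hence $\mathcal{A}$, is proper and smooth over $k$, and $2g-1$ lies in the range below the diagonal, this group is a subquotient of $H^{2g-2}(\mathcal{A}(\matC),\matC)$, on which $[n]_\ast$ acts compatibly. The Leray filtration for $\pi:\mathcal{A}\to S$ has graded pieces $H^a(S(\matC),R^b\pi_\ast\matQ)$ with $a+b=2g-2$; on $R^b\pi_\ast\matQ=\bigwedge^b R^1\pi_\ast\matQ$ the isogeny acts by $[n]^\ast=n^b$, so $[n]_\ast=n^{2g-b}\geq n^{2g-(2g-2)}=n^2$ on each piece. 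Thus every eigenvalue of $[n]_\ast$ on $\textnormal{H}^{2g-1}_D(\mathcal{A}_\matR,\matR(g))$ is a power $n^m$ with $m\geq 2$, in particular none equals $1$. Uniqueness is then immediate: if $\mathfrak{g},\mathfrak{g}'$ both satisfy (1)--(3), then $\eta:=\mathfrak{g}-\mathfrak{g}'$ lies in $V$ (same cycle and same curvature, so $a(\eta)=0$) and is $[n]_\ast$-invariant, forcing $\eta=0$. For existence, the affine map $[n]_\ast\colon C\to C$ has linear part $[n]_\ast|_V$ with $\textnormal{id}-[n]_\ast|_V$ invertible, hence a unique fixed point $\mathfrak{g}_{\mathcal{A}^\vee}$; since the operators $[m]_\ast$ commute and each preserves $C$, $[m]_\ast\mathfrak{g}_{\mathcal{A}^\vee}$ is again an $[n]_\ast$-fixed point of $C$ and so equals $\mathfrak{g}_{\mathcal{A}^\vee}$, making it independent of $n$ and invariant under all $[m]_\ast$. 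This $\mathfrak{g}_{\mathcal{A}^\vee}$ satisfies (1), (2) and (3).

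The step I expect to be most delicate is not the formal fixed-point argument but the identity $[n]_\ast T=T$ at the level of \emph{arithmetic} Chow groups: one must propagate the hermitian structure through $([n]\times\textnormal{id})^\ast\bar{\mathcal{P}}\cong(\textnormal{id}\times[n])^\ast\bar{\mathcal{P}}$ (which itself requires the uniqueness characterization of $h_{\mathcal{P}}$ used to define $\bar{\mathcal{P}}$) and ensure that base change, the projection formula, and the degree relation $f_\ast f^\ast=\deg f$ hold for $\widehat{\textnormal{ch}}$ together with its secondary current components, not merely for the underlying cycles. By contrast, the cycle-class identity is classical Fourier--Mukai plus Grothendieck--Riemann--Roch, and the eigenvalue estimate reduces to the standard description of real Deligne cohomology below the diagonal and the elementary action of $[n]$ on the Leray graded pieces.
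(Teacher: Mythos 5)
This paper does not actually prove Theorem \ref{uniqclass}: the statement is imported verbatim from Maillot and R\"ossler \cite{onacan}, so your proposal can only be measured against the argument given there. In substance you have reconstructed it: the cycle-class identity via Fourier--Mukai and Grothendieck--Riemann--Roch, the invariance $[n]_*T=T$ of the arithmetic class via the symmetry of the Poincar\'e bundle together with base change and $f_*f^*=\deg f$, and existence/uniqueness from the fact that $[n]_*$ acts on the relevant indeterminacy space with eigenvalues $n^m$, $m\geq 2$ (Deligne's degeneration of the Leray spectral sequence), so that $\textnormal{id}-[n]_*$ is invertible there. The torsor/fixed-point packaging and the commutation trick making $\mathfrak{g}_{\mathcal{A}^\vee}$ independent of $n$ are correct.

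There is, however, one genuine gap as written, plus a repairable slip. The gap: you assume $S$, hence $\mathcal{A}$, is proper (``Since $S$, hence $\mathcal{A}$, is proper and smooth over $k$\dots''), but Theorem \ref{uniqclass} carries no properness hypothesis --- and this paper in fact uses it in Section 5 (Proposition \ref{zero} and the corollary following it) for a general arithmetic variety $S$; properness only enters in Section 6. Your eigenvalue step uses compactness of $\mathcal{A}(\matC)$ twice: to present $\textnormal{H}^{2g-1}_{D}(\mathcal{A}_\matR,\matR(g))$ as a (sub)quotient of $H^{2g-2}(\mathcal{A}(\matC),\matC)$, and to identify differences of Green currents for $S_0$ with classes computing that group. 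In the open case Deligne--Beilinson cohomology involves the mixed Hodge structures on $H^{2g-2}$ and $H^{2g-1}$ of $\mathcal{A}(\matC)$, so this step must be redone; it does survive, since DB cohomology remains finite dimensional and functorial under the finite flat morphism $[n]$, and the Leray argument then gives eigenvalues $n^{2g-b}$ with $b\leq 2g-1$, still never equal to $1$ --- but as written your proof only establishes the proper case. The slip: to produce a Green current $\mathfrak{g}_1$ with $[(S_0,\mathfrak{g}_1)]=T$ you invoke exactness of (\ref{exbur}); but the last map there is $(\zeta,-\omega)$, so membership in $\textnormal{Im}(a)$ requires vanishing of the curvature form as well as of the cycle class, and you only verify the latter. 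Using instead the Gillet--Soul\'e sequence (\ref{exact}) (which holds on $\mathcal{A}$, not just on $\mathcal{U}$), whose final map is the cycle class alone, this step becomes correct. The points you flag as delicate --- the isometry $([n]\times\textnormal{id})^*\bar{\mathcal{P}}\cong(\textnormal{id}\times[n])^*\bar{\mathcal{P}}$ and the exact form of $Rp_{1,*}\mathcal{P}$ --- are real obligations but standard, and a line-bundle twist in $Rp_{1,*}\mathcal{P}$ would not affect the degree-$g$ conclusion.
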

Here $[n]:\mathcal{A}\rightarrow \mathcal{A}$ is the multiplication-by-$n$ morphism and $\widehat{\textnormal{ch}}(\bar{\mathcal{P}})$ is the arithmetic Chern character of the hermitian
bundle $\bar{\mathcal{P}}$.

Let us fix $N$ a positive natural number and call
\[ T:=[N]^*(S_0,  \mathfrak{g}_{\mathcal{A}^\vee})-N^{2g}(S_0,  \mathfrak{g}_{\mathcal{A}^\vee}). \] 
We will write $[T]$ for its class in $\widehat{\textnormal{CH}}^g (\mathcal{A})$. We denote by $\mathcal{A}[N]$ 
the $N$-torsion of $\mathcal{A}$, by $i$ the immersion  $\mathcal{U}:=\mathcal{A}\setminus\mathcal{A}[N]\hookrightarrow \mathcal{A}$ 
and by $\Gamma$ 
the restriction of the class of  currents 
 \[[N]^\ast \mathfrak{g}_{\mathcal{A}^\vee}
 -N^{2g} \mathfrak{g}_{\mathcal{A}^\vee} \] 
 to $\mathcal{U}(\matC)$.
Then the morphism
\[
  i^*:\widehat{\textnormal{CH}}^g (\mathcal{A}) \rightarrow \widehat{\textnormal{CH}}^g (\mathcal{U})
\]
sends $[T]$ to $[0,\Gamma]$.
We recall the fundamental exact sequence
\begin{equation}\label{exact}
\begin{tikzpicture}[description/.style={fill=white,inner sep=2pt}, ciao/.style={fill=red,inner sep=2pt}]
\matrix (m) [matrix of math nodes, row sep=3.5em,
column sep=2.5em, text height=1.5ex, text depth=0.25ex]
{  \textnormal{CH}^{g,g-1}(\mathcal{U}) & \tilde E^{g-1,g-1}(\mathcal{U}_\matR) &  \widehat{\textnormal{CH}}^g (\mathcal{U})
& \textnormal{CH}^g (\mathcal{U}) & 0\\
  };
	\path[->,font=\scriptsize] 
		(m-1-1) edge node[above] {$ \rho_{\textnormal{an}}$ } (m-1-2)
		(m-1-2) edge node[above] {$ a $} (m-1-3)
		(m-1-3) edge node[above] {$  $} (m-1-4)
		(m-1-4) edge node[above] {$ $} (m-1-5)
		;
\end{tikzpicture}
\end{equation}
(See Theorem 3.3.5 in \cite{gilsou}) for this). Here the map $a$ sends the class of $\omega$ to $[0,\omega]$. By construction, $ \rho_{\textnormal{an}}$ is the 
following composite function
\begin{center}
\begin{tikzpicture}[description/.style={fill=white,inner sep=2pt}, ciao/.style={fill=red,inner sep=2pt}]
\matrix (m) [matrix of math nodes, row sep=2.5em,
column sep=2.5em, text height=1.5ex, text depth=0.25ex]
{  \textnormal{CH}^{g,g-1}(\mathcal{U}) & \textnormal{H}^{2g-1}_{D}(\mathcal{U}_{\matR},\matR(g)) &
\textnormal{H}^{2g-1}_{D,\textnormal{an}}(\mathcal{U}_{\matR},\matR(g)) &\tilde E^{g-1,g-1}(\mathcal{U}_\matR) \\
  };

	\path[->,font=\scriptsize] 
		(m-1-1) edge node[above] {$ \rho $} (m-1-2)
		(m-1-2) edge node[above] {$ \textnormal{forgetful} $} (m-1-3)
		(m-1-3) edge node[above] {$  $} (m-1-4)
		;
\end{tikzpicture}
\end{center}
where the third map is a natural inclusion. Indeed we have
\[
 \textnormal{H}^{2g-1}_{D,\textnormal{an}}(\mathcal{U}_{\matR},\matR(g))=\left\{ c\in   (2\pi i)^{g-1}E^{g-1,g-1}(\mathcal{U}_{\matR}) \left| 
 \partial \bar{\partial} g=0 
 \right\}\right/ (\textnormal{Im}\partial +\textnormal{Im} \bar \partial)  
\]
and the class of $c$ is sent to the class of $c/(2\pi i)^{g-1} $.

Since the image of $[0,\Gamma]$ in $\textnormal{CH}^g (\mathcal{U})$ is $0$, there exists an element in
$\tilde E^{g-1,g-1}(\mathcal{U}_\matR)$ sent to $[0,\Gamma]$ by $a$. We know how to construct such an element:
thanks to Theorem 
 1.3.5 and Theorem 1.2.4 in \cite{gilsou}, the class of currents 
 $[N]^*\mathfrak{g}_{\mathcal{A}^\vee}-N^{2g}\mathfrak{g}_{\mathcal{A}^\vee}$
 has a representative $\gamma$ in $E^{g-1,g-1}(\mathcal{U}_\matR)$.
 Now if $\gamma'$ is any another representative smooth on $\mathcal{U}(\matC)$, we have that
 $\gamma-\gamma'$ is a smooth form on $\mathcal{U}(\matC)$ and $\gamma-\gamma'=
 \partial c_1+ \bar\partial c_2$ for some currents $c_1$ and $c_2$. Therefore there exist two smooth forms 
$\omega_1$ and $\omega_2$ such that $\gamma-\gamma'=
\partial \omega_1 + \bar \partial \omega_2$ (see Theorem 1.2.2 (ii) in \cite{gilsou}). This implies that the class
$\tilde \gamma\in \tilde E^{g-1,g-1}(\mathcal{U}_\matR)$ does not depend on $\gamma$.
We have
\[
 a(\tilde \gamma)=[0,\tilde \gamma]=[0,\Gamma].
\]
The calculations we do to prove the next proposition are basically the same the reader can find in  Lemma 2.4.5 in \cite{kingsross}.
\begin{prop} \label{zero}
 The element $[T]$ is zero in $\widehat{\textnormal{CH}}^g (\mathcal{A})_\matQ$.
\end{prop}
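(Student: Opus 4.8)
The plan is to avoid computing the pullback $[N]^\ast\mathfrak{g}_{\mathcal{A}^\vee}$ at the level of currents, and to work entirely inside $\widehat{\textnormal{CH}}^g(\mathcal{A})_\matQ$ by exploiting property (2) of Theorem \ref{uniqclass}, which expresses the arithmetic class $(S_0,\mathfrak{g}_{\mathcal{A}^\vee})$ as the degree $g$ component of $(-1)^g p_{1,\ast}\widehat{\textnormal{ch}}(\bar{\mathcal{P}})$. It suffices to establish the single identity
\[
[N]^\ast(S_0,\mathfrak{g}_{\mathcal{A}^\vee})=N^{2g}\,(S_0,\mathfrak{g}_{\mathcal{A}^\vee})\quad\text{in }\widehat{\textnormal{CH}}^g(\mathcal{A})_\matQ,
\]
since this is exactly the assertion $[T]=0$. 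The strategy is to apply $[N]^\ast$ to property (2) and to chase the metrized Poincaré bundle through the projection $p_1$; the symmetry of $\bar{\mathcal{P}}$ under multiplication on either factor will then produce the factor $N^{2g}$. Since flat pullback preserves codimension and $p_{1,\ast}$ shifts it uniformly by $-g$, all the manipulations below respect the natural grading, so I may carry the superscript $(g)$ along throughout.

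First I would set up the base-change square. As $[N]\colon\mathcal{A}\to\mathcal{A}$ is finite flat and $p_1\colon\mathcal{A}\times_S\mathcal{A}^\vee\to\mathcal{A}$ is smooth and proper, the square with horizontal arrows $[N]$ and $[N]\times\textnormal{Id}_{\mathcal{A}^\vee}$ and vertical arrows $p_1$ is cartesian. The compatibility of flat pullback with proper pushforward for arithmetic Chow groups (Gillet--Soul\'e, recalled in \cite{gilsou}), together with functoriality of the arithmetic Chern character, then gives
\[
[N]^\ast p_{1,\ast}\,\widehat{\textnormal{ch}}(\bar{\mathcal{P}})=p_{1,\ast}\,([N]\times\textnormal{Id})^\ast\widehat{\textnormal{ch}}(\bar{\mathcal{P}})=p_{1,\ast}\,\widehat{\textnormal{ch}}\bigl(([N]\times\textnormal{Id})^\ast\bar{\mathcal{P}}\bigr).
\]

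Next I would invoke the symmetry of $\bar{\mathcal{P}}$. The seesaw principle identifies $([N]\times\textnormal{Id})^\ast\mathcal{P}$ with $(\textnormal{Id}\times[N])^\ast\mathcal{P}$ as line bundles; and because the canonical metric $h_\mathcal{P}$ is rigidified along the zero section and has translation-invariant curvature that is bilinear in the $\mathcal{A}$- and $\mathcal{A}^\vee$-directions, both pullbacks of $\bar{\mathcal{P}}$ inherit the same rigidification and the same curvature form, so the isomorphism is an isometry, $([N]\times\textnormal{Id})^\ast\bar{\mathcal{P}}\cong(\textnormal{Id}\times[N])^\ast\bar{\mathcal{P}}$. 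Since $\textnormal{Id}\times[N]$ is finite flat of degree $N^{2g}$ and satisfies $p_1\circ(\textnormal{Id}\times[N])=p_1$, the projection formula yields $p_{1,\ast}(\textnormal{Id}\times[N])^\ast=N^{2g}\,p_{1,\ast}$. Chaining the equalities and re-applying property (2) produces $[N]^\ast(S_0,\mathfrak{g}_{\mathcal{A}^\vee})=N^{2g}(S_0,\mathfrak{g}_{\mathcal{A}^\vee})$, whence $[T]=0$.

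The two points that carry the genuine content of the argument are the \emph{isometry} of the two pullbacks of $\bar{\mathcal{P}}$---this is precisely where the normalization of $h_\mathcal{P}$ fixed in Section 5 is needed, so that no spurious archimedean term survives the seesaw identification---and the validity of the base-change identity at the level of Green currents and not merely on the underlying cycles. I expect the isometry step to be the main obstacle, since the seesaw isomorphism is a priori only determined up to scaling and one must check that the rigidification pins it down metrically. These are exactly the computations performed, in the analogous motivic setting, in Lemma 2.4.5 of \cite{kingsross}, and I would follow that template closely.
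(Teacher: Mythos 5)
Your proof is correct, and its first half coincides with the paper's: both apply property (2) of Theorem \ref{uniqclass}, use the same cartesian square to commute $[N]^\ast$ past $p_{1,\ast}$ (with the same degree bookkeeping, $(p_{1,\ast}\widehat{\textnormal{ch}}(\bar{\mathcal{P}}))^{(g)}=p_{1,\ast}(\widehat{\textnormal{ch}}(\bar{\mathcal{P}})^{(2g)})$), and use functoriality of $\widehat{\textnormal{ch}}$ to reduce everything to an identity for the metrized pullback $([N]\times\textnormal{Id})^\ast\bar{\mathcal{P}}$. Where you genuinely diverge is in how the factor $N^{2g}$ is produced. The paper identifies $([N]\times\textnormal{Id})^\ast\mathcal{P}$ with $\mathcal{P}^{\otimes N}$ via the universal property defining $\mathcal{A}^\vee$, expands the top-degree component of the Chern character as $\widehat{\textnormal{ch}}^{(2g)}=\hat{\textnormal{c}}_1^{2g}/(2g)!$, and extracts $N^{2g}$ from multiplicativity of $\hat{\textnormal{c}}_1$. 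You instead identify $([N]\times\textnormal{Id})^\ast\bar{\mathcal{P}}$ with $(\textnormal{Id}\times[N])^\ast\bar{\mathcal{P}}$ (self-duality of $[N]$), and extract $N^{2g}$ from the degree formula: writing $q=\textnormal{Id}\times[N]$, one has $p_{1,\ast}q^\ast=p_{1,\ast}q_\ast q^\ast=N^{2g}p_{1,\ast}$ since $p_1\circ q=p_1$ and $q_\ast q^\ast=\deg q$. Your route never expands $\widehat{\textnormal{ch}}$ into Chern classes, at the cost of invoking the projection/degree formula for the finite flat isogeny $\textnormal{Id}\times[N]$ in arithmetic Chow theory (legitimate here, as the map is étale on the generic fibre); the paper's route needs only a single pullback identity plus a formal Chern-class computation. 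Two details to tighten: (i) cartesianness of the base-change square is a direct verification (the paper checks it via the universal property of the fibre product); it is not a consequence of $[N]$ being finite flat and $p_1$ smooth proper, as your phrasing suggests; (ii) the isometry $([N]\times\textnormal{Id})^\ast\bar{\mathcal{P}}\cong(\textnormal{Id}\times[N])^\ast\bar{\mathcal{P}}$ is best justified by uniqueness of the metric on a rigidified line bundle whose curvature is translation-invariant along the fibres: the ratio of two such metrics is $e^f$ with $f$ pluriharmonic, hence constant on the compact fibres over $S(\matC)$, and the rigidification forces $f=0$. You flag this point correctly as the real content; note that the paper's own proof silently uses the analogous fact that $(N\times\textnormal{Id})^\ast\bar{\mathcal{P}}\cong\bar{\mathcal{P}}^{\otimes N}$ holds as hermitian line bundles, so your treatment is, if anything, more careful there.
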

\begin{proof}
 By Theorem \ref{uniqclass}, we have
 \[
  [T]=(-1)^g ([N]^*-N^{2g})\left( p_{1,*}\left(\widehat{\textnormal{ch}}(\bar{\mathcal{P}})\right)^{(g)}\right)
 \]
in $\widehat{\textnormal{CH}}^g (\mathcal{A})_\matQ$. Now consider the following square of schemes over $S$:
\begin{center}
\begin{tikzpicture}[description/.style={fill=white,inner sep=2pt}, ciao/.style={fill=red,inner sep=2pt}]
\matrix (m) [matrix of math nodes, row sep=3.5em,
column sep=3.5em, text height=1.5ex, text depth=0.25ex]
{ \mathcal{A}\times_S \mathcal{A}^\vee & \mathcal{A}\times_S \mathcal{A}^\vee  \\
  \mathcal{A} & \mathcal{A} \\
  };
	\path[->,font=\scriptsize]
		(m-1-1) edge node[left] {$ p_1 $} (m-2-1)
		
		(m-1-2) edge node[auto] {$ p_1 $} (m-2-2)
		
		;
	\path[->,font=\scriptsize] 
		(m-1-1) edge node[above] {$ N\times \textnormal{Id} $} (m-1-2)
		(m-2-1) edge node[above] {$ N $} (m-2-2)
		
		;
\end{tikzpicture}
\end{center}
For any $Q$ scheme over $S$ and any pair of morphisms of schemes over $S$ 
\begin{align*} (\zeta,\sigma)&:Q\rightarrow 
\mathcal{A}\times_S \mathcal{A}^\vee \\ \eta&:Q\rightarrow \mathcal{A}
\end{align*} 
 we can define 
$(\eta,\sigma):Q\rightarrow \mathcal{A}\times_S \mathcal{A}^\vee$. It is easy to see that this is a morphism of schemes 
over $S$ and that, if $N\circ \eta=\zeta$, it verifies
$(N\times \textnormal{Id})\circ (\eta,\sigma)=(\zeta,\sigma)$ and $p_1\circ(\eta,\sigma)=\eta$. Furthermore, $(\eta,\sigma)$ is the 
unique morphism of schemes over $S$ with these properties. Therefore the square above is cartesian. Since the direct image in arithmetic Chow theory is naturally
compatible with smooth base change, we have
\begin{align*}
 (-1)^g[N]^\ast \left( p_{1,\ast}
  (\widehat{\textnormal{ch}}(\bar{\mathcal{P}}))^{(g)}\right)
  &=(-1)^g p_{1,\ast}
  \left[\left((N\times \textnormal{Id})^\ast \widehat{\textnormal{ch}}(\bar{\mathcal{P}})\right)^{(2g)}\right] \\
  &=(-1)^g p_{1,\ast}\left[
  \left(\widehat{\textnormal{ch}}((N\times \textnormal{Id})^\ast (\bar{\mathcal{P}}))\right)^{(2g)}\right]. \\
\end{align*}
From the definition of dual abelian scheme we know that there is an isomorphism between the group
$\textnormal{End}(\mathcal{A}, \mathcal{A})$ and the group of isomorphism classes of invertible sheaves 
$\mathcal{L}$ on
$\mathcal{A}\times_S \mathcal{A}^\vee$  with rigidification along $\epsilon_{\mathcal{A}^\vee}$ such that 
$\mathcal{L}\otimes
k(a)$ is algebraically equivalent to $0$ in $A_a$ for all $a\in \mathcal{A}^\vee$. Via this isomorphism, a map $\phi:\mathcal{A}\rightarrow \mathcal{A}$  
is sent to $(\phi \times id_{\mathcal{A}^\vee})^*(\mathcal{P})$, so the image 
of $\textnormal{Id}_{\mathcal{A}}$ is $\mathcal{P}$. Then the image of $[N]$, i.e $(N\times \textnormal{Id})^\ast (\mathcal{P})$, must coincide with  
$\mathcal{P}^{\otimes N}$. Therefore:
\[ (-1)^g[N]^\ast \left( p_{1,\ast}
  (\widehat{\textnormal{ch}}(\bar{\mathcal{P}}))^{(g)}\right)=(-1)^g p_{1,\ast}\left[
  \left(\widehat{\textnormal{ch}}\left(\bar{\mathcal{P}}^{\otimes N}\right)\right)^{(2g)}\right]
  \]
and
\begin{align*}
 [T]&=(-1)^g \left[   p_{1,\ast}\left[
  \left(\widehat{\textnormal{ch}}\left(\bar{\mathcal{P}}^{\otimes N}\right)\right)^{(2g)}\right] -N^{2g} 
  p_{1,*}\left(\widehat{\textnormal{ch}}(\bar{\mathcal{P}})\right)^{(g)}\right] \\
  &=(-1)^g \left[p_{1,\ast}
  \left(\frac{\hat{\textnormal{c}_1} \left(\bar{\mathcal{P}}^{\otimes N}\right)^{2g}}{2g!}\right)-N^{2g} p_{1,\ast}
  \left(\frac{\hat{\textnormal{c}_1}
  (\bar{\mathcal{P}})^{2g}}{2g!}\right)\right]\\
  &=(-1)^g \left[p_{1,\ast}
  \left(\frac{N^{2g}\hat{\textnormal{c}_1} \left(\bar{\mathcal{P}}\right)^{2g}}{2g!}\right)-N^{2g} p_{1,\ast}
  \left(\frac{\hat{\textnormal{c}_1}
  (\bar{\mathcal{P}})^{2g}}{2g!}\right)\right] \\
  &=0 \\
\end{align*}
where $\hat{\textnormal{c}_1}(\cdot) $ refers to the first  arithmetic Chern class of a hermitian bundle. Notice that we  used the multiplicativity of 
$ \hat{\textnormal{c}_1}(\cdot)$.
\end{proof}
\begin{cor}
The class $(2\pi i)^{g-1} \tilde \gamma \in (2\pi i)^{g-1}\tilde E^{g-1,g-1}(\mathcal{U}_\matR)$ is in the image of 
\[ \textnormal{cyc}_{\textnormal{an}}:  H^{2g-1}_{\mathcal{M}}(\mathcal{U},g)\rightarrow \textnormal{H}^{2g-1}_{D,\textnormal{an}}(\mathcal{U},\matR(g)) . \]
\end{cor}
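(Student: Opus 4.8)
The plan is to combine the vanishing of $[T]$ established in Proposition \ref{zero} with the pullback compatibility of the arithmetic Chow groups and the exactness of the sequence (\ref{exact}), and then to unwind the definition of $\rho_{\textnormal{an}}$.

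First, Proposition \ref{zero} gives $[T]=0$ in $\widehat{\textnormal{CH}}^g(\mathcal{A})_\matQ$. Since the pullback $i^*:\widehat{\textnormal{CH}}^g(\mathcal{A})\to\widehat{\textnormal{CH}}^g(\mathcal{U})$ sends $[T]$ to $[0,\Gamma]$, I would conclude that $[0,\Gamma]=0$ in $\widehat{\textnormal{CH}}^g(\mathcal{U})_\matQ$. Recalling the identity $a(\tilde\gamma)=[0,\tilde\gamma]=[0,\Gamma]$ observed just before Proposition \ref{zero}, this says precisely that $a(\tilde\gamma)=0$ in $\widehat{\textnormal{CH}}^g(\mathcal{U})_\matQ$.

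Next, I would tensor the sequence (\ref{exact}) with $\matQ$ over $\matZ$. As this operation is exact and the term $\tilde E^{g-1,g-1}(\mathcal{U}_\matR)$ is already an $\matR$-vector space (so unchanged by the tensoring), the resulting sequence remains exact at $\tilde E^{g-1,g-1}(\mathcal{U}_\matR)$. The relation $a(\tilde\gamma)=0$ then produces a class $\psi\in\textnormal{CH}^{g,g-1}(\mathcal{U})_\matQ\cong H^{2g-1}_{\mathcal{M}}(\mathcal{U},g)$ with $\rho_{\textnormal{an}}(\psi)=\tilde\gamma$.

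Finally, I would read off the conclusion from the description of $\rho_{\textnormal{an}}$ recalled before Proposition \ref{zero}, namely as the composite of $\rho$, the forgetful map to $\textnormal{H}^{2g-1}_{D,\textnormal{an}}(\mathcal{U}_\matR,\matR(g))$, and the inclusion $\textnormal{H}^{2g-1}_{D,\textnormal{an}}(\mathcal{U}_\matR,\matR(g))\hookrightarrow\tilde E^{g-1,g-1}(\mathcal{U}_\matR)$ that sends the class of $c$ to that of $c/(2\pi i)^{g-1}$. Under the identification $\textnormal{CH}^{g,g-1}(\mathcal{U})_\matQ\cong H^{2g-1}_{\mathcal{M}}(\mathcal{U},g)$, the composite of the first two arrows is exactly $\textnormal{cyc}_{\textnormal{an}}$, so $\textnormal{cyc}_{\textnormal{an}}(\psi)\in\textnormal{H}^{2g-1}_{D,\textnormal{an}}(\mathcal{U}_\matR,\matR(g))$ is the class mapping to $\tilde\gamma$ under the inclusion. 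Undoing the scaling $c\mapsto c/(2\pi i)^{g-1}$ then yields $\textnormal{cyc}_{\textnormal{an}}(\psi)=(2\pi i)^{g-1}\tilde\gamma$, which is the desired statement.

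I expect the only genuinely delicate point to be the bookkeeping of the twist $(2\pi i)^{g-1}$ relating $\tilde E^{g-1,g-1}(\mathcal{U}_\matR)$ to $\textnormal{H}^{2g-1}_{D,\textnormal{an}}(\mathcal{U}_\matR,\matR(g))$, together with checking that $\textnormal{forgetful}\circ\rho$ agrees with $\textnormal{cyc}_{\textnormal{an}}$. There is no analytic obstacle here: the $\partial\bar\partial$-closedness needed for $(2\pi i)^{g-1}\tilde\gamma$ to define a class in analytic real Deligne cohomology is automatic, since the image of $\rho_{\textnormal{an}}$ lands by construction inside the image of the inclusion $\textnormal{H}^{2g-1}_{D,\textnormal{an}}(\mathcal{U}_\matR,\matR(g))\hookrightarrow\tilde E^{g-1,g-1}(\mathcal{U}_\matR)$.
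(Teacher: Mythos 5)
Your proposal is correct and follows essentially the same route as the paper's own proof: deduce $a(\tilde\gamma)=i^*([T])=0$ in $\widehat{\textnormal{CH}}^g(\mathcal{U})_\matQ$ from Proposition \ref{zero}, invoke exactness of the rationalized sequence (\ref{exact}) to write $\tilde\gamma=\rho_{\textnormal{an}}(\psi)$ for some $\psi\in\textnormal{CH}^{g,g-1}(\mathcal{U})_\matQ\cong H^{2g-1}_{\mathcal{M}}(\mathcal{U},g)$, and unwind the definition of $\rho_{\textnormal{an}}$ (including the $(2\pi i)^{g-1}$ normalization of the inclusion into $\tilde E^{g-1,g-1}(\mathcal{U}_\matR)$) to conclude $(2\pi i)^{g-1}\tilde\gamma=\textnormal{cyc}_{\textnormal{an}}(\psi)$. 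The only difference is that you spell out explicitly the tensoring with $\matQ$ and the twist bookkeeping, which the paper leaves implicit.
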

\begin{proof}
 Proposition \ref{zero} implies that \[ 0=i^*([T])=[0,\Gamma]=a(\tilde \gamma)\] in  $\widehat{\textnormal{CH}}^g (\mathcal{U})_\matQ $ and the 
 exactness of sequence (\ref{exact}) gives us 
 $ \tilde \gamma \in \rho_{\textnormal{an}}( \textnormal{CH}^{g,g-1}(\mathcal{U})_{\matQ}). $
By the definition of $ \rho_{\textnormal{an}}$, we obtain that 
\[
 (2\pi i)^{g-1} \tilde \gamma\in (\textnormal{forgetful}\circ \rho)(\textnormal{CH}^{g,g-1}(\mathcal{U})_{\matQ}).
\]
This is exactly what we wanted to prove, once one identifies $ H^{2g-1}_{\mathcal{M}}(\mathcal{U},g) $ with $ \textnormal{CH}^{g,g-1}(\mathcal{U})_{\matQ} $.
\end{proof}
\section{\textbf{THE MAIN RESULT}}
We are now able to apply Theorem \ref{genlem} to our situation. We assume that $S$ is proper over
$R$, so $\mathcal{A} $ is proper over $R$.
\begin{cor}\label{poly}
 There exists a representative of $[N]^*\mathfrak{g}_{\mathcal{A}^\vee}-N^{2g}\mathfrak{g}_{\mathcal{A}^\vee}$
 belonging to $E^{g-1,g-1}_{\textnormal{log},\matR}(\mathcal{U}(\matC))$. Any such $\eta$
   defines an element
 \[\frac{\widetilde{\eta}}{2(2\pi i)^{1-g}}\in \textnormal{Im}\left(\textnormal{cyc}: 
 H^{2g-1}_{\mathcal{M}}(\mathcal{U},g) \rightarrow \textnormal{H}^{2g-1}_{D}
 (\mathcal{U}_\matR, \matR(g))\right)
 \]
which does not depend on the choice of $\eta$ and verifies
\[
 a\left(\frac{\widetilde{\eta}}{2(2\pi i)^{1-g}}\right)=i^*\left(\psi_{\mathcal{A}}^{-1}([y,h])\right)=0.
\]
Furthermore
\[
 \textnormal{forgetful}\left(\frac{\widetilde{\eta}}{2(2\pi i)^{1-g}}\right)= \frac{\widetilde{\gamma}}{2(2\pi i)^{1-g}}.
\]
\end{cor}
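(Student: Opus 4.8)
The plan is to apply Theorem \ref{genlem} to the proper arithmetic variety $X=\mathcal{A}$ with $p=g$, choosing for the cycle $y:=[N]^*S_0-N^{2g}S_0\in Z^g(\mathcal{A})$ and for the Green current $h:=[N]^*\mathfrak{g}_{\mathcal{A}^\vee}-N^{2g}\mathfrak{g}_{\mathcal{A}^\vee}$, so that $(y,h)$ is exactly the arithmetic cycle $T$ of Section 5. First I would check the hypotheses. The variety $\mathcal{A}$ is proper over $R$ since $S$ is, and $h$ is a Green current for $y$ by property (1) of Theorem \ref{uniqclass} together with the functoriality of Green currents under $[N]^*$ and under multiplication by $N^{2g}$; the support of $y$ is $Y=\mathcal{A}[N]$, so that $\mathcal{A}(\matC)\setminus Y(\matC)=\mathcal{U}(\matC)$. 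The key input is the vanishing of the curvature $\omega([y,h])=\delta_y+\textnormal{dd}^{\textnormal{c}}h$: since $\omega$ is a well-defined homomorphism on $\widehat{\textnormal{CH}}^g(\mathcal{A})_\matQ$ and Proposition \ref{zero} gives $[T]=[y,h]=0$, we obtain $\omega([y,h])=0$.

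With the hypotheses in place, Theorem \ref{genlem} directly yields the first two assertions: a representative $\eta=h_0$ of $h$ lying in $E^{g-1,g-1}_{\textnormal{log},\matR}(\mathcal{U}(\matC))$, and for any such $\eta$ a class $\frac{\widetilde{\eta}}{2(2\pi i)^{1-g}}\in\textnormal{H}^{2g-1}_D(\mathcal{U}_\matR,\matR(g))$ independent of the choice of $\eta$ with $a\left(\frac{\widetilde{\eta}}{2(2\pi i)^{1-g}}\right)=i^*\left(\psi_{\mathcal{A}}^{-1}([y,h])\right)$. To see that the right-hand side vanishes, I would use that $\psi_{\mathcal{A}}$ is an isomorphism ($\mathcal{A}$ being proper over $R$), so $\psi_{\mathcal{A}}^{-1}([y,h])=\psi_{\mathcal{A}}^{-1}([T])=0$ by Proposition \ref{zero}, whence $i^*\left(\psi_{\mathcal{A}}^{-1}([y,h])\right)=0$. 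To upgrade membership in $\textnormal{H}^{2g-1}_D$ to membership in the image of the regulator, I would then invoke Remark \ref{image}: the vanishing $i^*\left(\psi_{\mathcal{A}}^{-1}([y,h])\right)=0$ in $\widehat{\textnormal{CH}}^g_{\textnormal{log}}(\mathcal{U})_\matQ$ is equivalent to $\frac{\widetilde{\eta}}{2(2\pi i)^{1-g}}\in\textnormal{cyc}\left(H^{2g-1}_{\mathcal{M}}(\mathcal{U},g)\right)$.

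It remains to identify the forgetful image. For this I would compare $\eta$ with the smooth representative $\gamma\in E^{g-1,g-1}(\mathcal{U}_\matR)$ constructed in Section 5. Both $\eta$ and $\gamma$ are smooth on $\mathcal{U}(\matC)$ and represent the same class of currents $([N]^*\mathfrak{g}_{\mathcal{A}^\vee}-N^{2g}\mathfrak{g}_{\mathcal{A}^\vee})|_{\mathcal{U}}$, so $\eta-\gamma=\partial c_1+\bar\partial c_2$ for currents $c_1,c_2$; by the regularization already used in Section 5 (Theorem 1.2.2(ii) in \cite{gilsou}) one may take $c_1,c_2$ smooth, and hence $\eta$ and $\gamma$ define the same class in $\tilde E^{g-1,g-1}(\mathcal{U}_\matR)$. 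Since the forgetful map is induced by the inclusion of Burgos' logarithmic complex into the complex of smooth forms on $\mathcal{U}(\matC)$, it carries the Deligne--Beilinson class $\frac{\widetilde{\eta}}{2(2\pi i)^{1-g}}$ to the analytic Deligne class of the same underlying form, namely $\frac{\widetilde{\gamma}}{2(2\pi i)^{1-g}}$.

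I expect the last step to be the delicate one: it requires tracking the normalization factors $2$ and $(2\pi i)^{g-1}$ through the identifications of Deligne--Beilinson and analytic Deligne cohomology with their respective form complexes, and reconciling the slightly differing conventions for the current associated to a form used by Burgos and those in Gillet--Soul\'e (the same discrepancy flagged in the proof of Theorem \ref{genlem}). By contrast, the verification that $\omega([y,h])=0$ and the passage through Remark \ref{image} are routine, resting entirely on Proposition \ref{zero} and on $\psi_{\mathcal{A}}$ being an isomorphism.
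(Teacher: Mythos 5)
Your proposal is correct and takes essentially the same route as the paper's own proof: apply Theorem \ref{genlem} and Remark \ref{image} to $X=\mathcal{A}$ with $(y,h)=T$, using Proposition \ref{zero} (together with $\psi_{\mathcal{A}}$ being an isomorphism) to verify the hypotheses and get the vanishing, and then identify the forgetful image by observing that $\eta$ itself lies in $E^{g-1,g-1}(\mathcal{U}_\matR)$, hence is one of the admissible smooth representatives defining $\widetilde{\gamma}$. The extra details you spell out (deducing $\omega([y,h])=0$ from the homomorphism property of $\omega$, and the regularization argument showing $\widetilde{\eta}=\widetilde{\gamma}$ in $\tilde E^{g-1,g-1}(\mathcal{U}_\matR)$) are precisely what the paper leaves implicit or has already established in Section 5, so there is no substantive difference.
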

\begin{proof}
 To prove the first assertion we apply Theorem \ref{genlem} and Remark \ref{image} with $X$ equal to $\mathcal{A}$ and 
 $(y,h)$ equal to $T$. The hypotheses are satisfied since  $[T]=0$ in $\widehat{\textnormal{CH}}^g (\mathcal{A})_\matQ$, so 
 $i^*\left(\psi_{\mathcal{A}}^{-1}([T])\right)=0$ in $\widehat{\textnormal{CH}}^g_{\textnormal{log}} 
 (\mathcal{U})_\matQ$. 
 
 To prove the second assertion, it is enough to notice that any 
 representative $\eta$ of $[N]^*\mathfrak{g}_{\mathcal{A}^\vee}-N^{2g}\mathfrak{g}_{\mathcal{A}^\vee}$ 
 belonging to $E^{g-1,g-1}_{\textnormal{log},\matR}(\mathcal{U}(\matC))$, also belongs to  $E^{g-1,g-1}(\mathcal{U}_\matR)$.
\end{proof}
\begin{rem}\label{weight0}
 A natural analog of the operator $\textnormal{tr}_{[a]}$ operates on Deligne-Beilinson cohomology and the map $\textnormal{cyc}$ intertwines this operator with 
 $\textnormal{tr}_{[a]}$. Therefore from the existence of the Jordan  decomposition and the fact
 \[\frac{\widetilde{\eta}}{2(2\pi i)^{1-g}}\in \textnormal{Im}\left(\textnormal{cyc}: 
 H^{2g-1}_{\mathcal{M}}(\mathcal{U},g) \rightarrow \textnormal{H}^{2g-1}_{D}
 (\mathcal{U}_\matR, \matR(g))\right)
 \]
 we deduce 
  \[\frac{\widetilde{\eta}}{2(2\pi i)^{1-g}}\in \textnormal{Im}\left(\textnormal{cyc}: 
 H^{2g-1}_{\mathcal{M}}(\mathcal{U},g)^{(0)} \rightarrow \textnormal{H}^{2g-1}_{D}
 (\mathcal{U}_\matR, \matR(g))\right).
 \]
\end{rem}
We are ready to prove our main result.
\begin{theo} \label{realDB} Let $\textnormal{pol}^0\in H^{2g-1}_{\mathcal{M}}(\mathcal{U},g)$ be the zero step of the motivic polylogarithm on
$\mathcal{A}$ (as defined in the Introduction). Then
\[
 -2\cdot\textnormal{cyc}(\textnormal{pol}^0)= \frac{\tilde\eta}{(2\pi i)^{1-g}}
\]
in $\textnormal{H}^{2g-1}_{D}
 (\mathcal{U}_\matR, \matR(g))$.
\end{theo}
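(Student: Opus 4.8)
The plan is to prove the identity in Deligne-Beilinson cohomology by lifting it to motivic cohomology, where an injectivity statement is already available, and then pushing it back down via the regulator. The starting observation is that both sides of the claimed equality lie in the image of $\textnormal{cyc}$ restricted to the weight-zero part of motivic cohomology. For the right-hand side this is exactly Remark \ref{weight0}: the class $\frac{\widetilde\eta}{2(2\pi i)^{1-g}}$, and hence $\frac{\widetilde\eta}{(2\pi i)^{1-g}}$, lies in $\textnormal{cyc}\bigl(H^{2g-1}_{\mathcal{M}}(\mathcal{U},g)^{(0)}\bigr)$. For the left-hand side, recall that $\textnormal{pol}^0$ is by its very construction an element of $H^{2g-1}_{\mathcal{M}}(\mathcal{U},g)^{(0)}$, so $-2\cdot\textnormal{cyc}(\textnormal{pol}^0)$ is the regulator image of $-2\,\textnormal{pol}^0 \in H^{2g-1}_{\mathcal{M}}(\mathcal{U},g)^{(0)}$. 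Thus each side can be written as $\textnormal{cyc}(\psi_i)$ for suitable weight-zero classes $\psi_1=-2\,\textnormal{pol}^0$ and $\psi_2$.

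First I would compose with the forgetful map $\textnormal{H}^{2g-1}_{D}(\mathcal{U}_\matR,\matR(g)) \to \textnormal{H}^{2g-1}_{D,\textnormal{an}}(\mathcal{U}_\matR,\matR(g))$ and identify both images. Since $\textnormal{cyc}_{\textnormal{an}}=\textnormal{forgetful}\circ\textnormal{cyc}$, the forgetful image of the left-hand side is $-2\cdot\textnormal{cyc}_{\textnormal{an}}(\textnormal{pol}^0)$, which by Theorem \ref{KR} equals $([N]^\ast \mathfrak{g}_{\mathcal{A}^\vee}-N^{2g}\mathfrak{g}_{\mathcal{A}^\vee})|_{\mathcal{U}}$. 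By Corollary \ref{poly}, the forgetful image of $\frac{\widetilde\eta}{(2\pi i)^{1-g}}$ is $\frac{\widetilde\gamma}{(2\pi i)^{1-g}}$, and by the description of the analytic Deligne class recalled in the Introduction this represents precisely the same class $([N]^\ast \mathfrak{g}_{\mathcal{A}^\vee}-N^{2g}\mathfrak{g}_{\mathcal{A}^\vee})|_{\mathcal{U}}$. Hence $\textnormal{cyc}_{\textnormal{an}}(\psi_1)=\textnormal{cyc}_{\textnormal{an}}(\psi_2)$ in analytic real Deligne cohomology.

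The key point, and the only genuine obstacle, is that the forgetful map is not injective, so agreement of the two sides in $\textnormal{H}^{2g-1}_{D,\textnormal{an}}$ does not by itself give agreement in $\textnormal{H}^{2g-1}_{D}$. To bypass this I would invoke the second assertion of Theorem \ref{KR}: the map $H^{2g-1}_{\mathcal{M}}(\mathcal{U},g)^{(0)} \to \textnormal{H}^{2g-1}_{D,\textnormal{an}}(\mathcal{U}_\matR,\matR(g))$ induced by $\textnormal{cyc}_{\textnormal{an}}$ is injective. Applying this to $\psi_1,\psi_2 \in H^{2g-1}_{\mathcal{M}}(\mathcal{U},g)^{(0)}$ forces $\psi_1=\psi_2$ already at the motivic level. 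Pushing this equality forward through $\textnormal{cyc}$ into Deligne-Beilinson cohomology then yields $-2\cdot\textnormal{cyc}(\textnormal{pol}^0)=\frac{\widetilde\eta}{(2\pi i)^{1-g}}$, as required. In short, it is the injectivity on the weight-zero piece that allows one to transport the equality established in analytic Deligne cohomology (Theorem \ref{KR}) up to the finer Deligne-Beilinson cohomology, which is the content of the refinement.
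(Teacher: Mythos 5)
Your proof is correct and is essentially the paper's own argument: both proceed by matching the two sides in analytic Deligne cohomology via Theorem \ref{KR} and Corollary \ref{poly}, using Remark \ref{weight0} to realize the class $\frac{\widetilde\eta}{2(2\pi i)^{1-g}}$ as $\textnormal{cyc}$ of a weight-zero motivic class, and then invoking the injectivity of $\textnormal{cyc}_{\textnormal{an}}$ on $H^{2g-1}_{\mathcal{M}}(\mathcal{U},g)^{(0)}$ to force equality at the motivic level before pushing back down through $\textnormal{cyc}$. The only cosmetic difference is that you symmetrize the argument by writing both sides as regulator images of weight-zero classes $\psi_1,\psi_2$, whereas the paper compares $\textnormal{pol}^0$ directly with the class $l$ furnished by Remark \ref{weight0}.
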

\begin{proof} 
 We start noticing that 
 \[
 \text{cyc}_{\textnormal{an}}(\textnormal{pol}^0)=- \frac{\tilde\gamma}{2(2\pi i)^{1-g}}= \textnormal{forgetful}\left(- \frac{\tilde\eta}{2(2\pi i)^{1-g}}\right).
 \]
By Remark \ref{weight0} we know that 
 \[
  - \frac{\tilde\eta}{2(2\pi i)^{1-g}}= \textnormal{cyc}(l)
 \]
for some $l \in H^{2g-1}_{\mathcal{M}}(\mathcal{U},g)^{(0)}$. Therefore we have 
\[
 \text{cyc}_{\textnormal{an}}(\textnormal{pol}^0)= \text{cyc}_{\textnormal{an}}(l)
\]
and the injectivity of $\text{cyc}_{\textnormal{an}} $ on $H^{2g-1}_{\mathcal{M}}(\mathcal{U},g)^{(0)}$ implies that $\textnormal{pol}^0=l$. We then obtain
 \[
  \textnormal{cyc}(\textnormal{pol}^0)=\textnormal{cyc}(l)= - \frac{\tilde\eta}{2(2\pi i)^{1-g}}.
 \]

\end{proof}
\nocite{*}
\bibliography{biblioarak}{}
\bibliographystyle{alpha}

\end{document}